\newtheorem*{theorem*}{Theorem}
\newtheorem{maintheorem}{Theorem}[section]
\newtheorem{mainquestion}[maintheorem]{Question}
\newtheorem{theorem}{Theorem}[section]
\newtheorem{proposition}[theorem]{Proposition}
\newtheorem{corollary}[theorem]{Corollary} 
\theoremstyle{definition}
\newtheorem{definition}[theorem]{Definition}
\newtheorem{example}[theorem]{Example}
\newtheorem{remark}[theorem]{Remark}
\newtheoremstyle{myitemstyle}						
	{}			
	{}			
	{}			
	{}			
	{}			
	{.}			
	{ }			
	{}			
\theoremstyle{myitemstyle}
\newtheorem{myitemthm}{}
\newcommand{\hooklongrightarrow}{\lhook\joinrel\longrightarrow}
\newcommand{\R}{\mathbb{R}}
\newcommand{\Z}{\mathbb{Z}}
\newcommand{\Q}{\mathbb{Q}}
\newcommand{\C}{\mathbb{C}}
\newcommand{\PP}{\mathbb{P}}
\renewcommand{\P}{\mathbb{P}}
\newcommand{\G}{\mathbb{G}}
\newcommand{\bbH}{\mathbb{H}}
\newcommand{\Mbar}{\overline{M}}
\newcommand{\calE}{\mathcal{E}}
\newcommand{\calM}{\mathcal{M}}
\newcommand{\calO}{\mathcal{O}}
\newcommand{\bfR}{\mathbf{R}}
\DeclareMathOperator{\Pic}{Pic}
\DeclareMathOperator{\Spec}{Spec}
\DeclareMathOperator{\Hom}{Hom}
\DeclareMathOperator{\pr}{pr}
\DeclareMathOperator{\id}{id}
\DeclareMathOperator{\GL}{GL}
\DeclareMathOperator{\Sym}{Sym}
\DeclareMathOperator{\Jac}{Jac}
\DeclareMathOperator{\End}{End}
\DeclareMathOperator{\Tor}{Tor}
\DeclareMathOperator{\Gr}{Gr}
\DeclareMathOperator{\sd}{sd}
\let\pr\relax
\DeclareMathOperator{\pr}{pr}
\title{$P=W$ phenomena on abelian varieties} 
\date{}
\author{Barbara Bolognese}
\address{Institut f\"ur Mathematik, Goethe--Universit\"at Frankfurt,
60325 Frankfurt am Main, Germany}
\email{bolognese@math.uni-frankfurt.de}
\author{Alex K\"uronya}
\address{Institut f\"ur Mathematik, Goethe--Universit\"at Frankfurt,
60325 Frankfurt am Main, Germany}
\email{kuronya@math.uni-frankfurt.de}
\author{Martin Ulirsch}
\address{Institut f\"ur Mathematik, Goethe--Universit\"at Frankfurt,
60325 Frankfurt am Main, Germany}
\email{ulirsch@math.uni-frankfurt.de}
\begin{document}

\maketitle

\begin{abstract} 
Let $X$ be a complex abelian variety. We prove an analogue of both the (cohomological) $P=W$ conjecture and the geometric $P=W$ conjecture connecting the finer topological structure of the Dolbeault moduli space of topologically trivial semistable Higgs bundles on $X$ and the Betti moduli space of characters of the fundamental group of $X$. The geometric heart of our approach is the spectral data morphism for Dolbeault moduli spaces on abelian varieties that naturally factors the Hitchin morphism and whose target is not an affine space of pluricanonical sections, but a suitable symmetric product.
\end{abstract}

\setcounter{tocdepth}{1}
\tableofcontents


\section*{Introduction}

The cohomology of complex algebraic varieties carries an intricate mix of additional structures that is determined by it being an algebraic variety. A particularly exciting aspect of this story is the study of moduli spaces associated to the fundamental group of an algebraic variety which originate in non-abelian Hodge theory and, in particular, in the seminal works of Corlette, Donaldson, Hitchin, and Simpson \cite{Corlette, Donaldson, Hitchin,Simpson_nonabelianHodge}. 

The purpose of this article is to study the case of these moduli spaces associated to a fixed abelian variety $X$. 
More specifically, we prove an analogue of the \emph{$($cohomological$)$ $P=W$ conjecture} of de Cataldo--Hausel--Migliorini \cite{dCHM_P=Wn=2} and of the \emph{geometric $P=W$ conjecture} of Katzarkov--Noll--Pandit--Simpson \cite{KNPS} (both originally stated only for curves) for an abelian variety $X$; we note that this is the first class of higher-dimensional varieties where the two $P=W$ conjectures are verified.

Non-abelian Hodge theory provides us with two (or actually three)  moduli spaces of central importance associated to a smooth projective algebraic variety $X$: the \emph{Betti moduli space} $M_\textrm{Betti}^r(X)$ of characters of the fundamental group and the \emph{Dolbeault moduli space} $M_{\textrm{Dol}}^r(X)$ parametrizing Higgs bundles on $X$ (as well as the de Rham moduli space which will play no role in the remainder of this article).  
Both of these moduli spaces are quasi-projective algebraic varieties which are very much not isomorphic as complex algebraic varieties (one being affine, the other one not). Nevertheless Simpson's work \cite{SimI, SimII} provides us with a real analytic isomorphism between $M_\textrm{Betti}^r(X)$ and $M_{\textrm{Dol}}^r(X)$. 

The Dolbeault moduli space has a distinctive feature: it comes with the \emph{Hitchin map} \cite{Hitchin},  a proper map to a base space that is constructed from global sections.  One may think of this as an `abelianization' process or  a form of Strominger--Yao--Zaslow  mirror symmetry. It is natural for the geometry of the Hitchin map to be seen in the cohomology ring of $M^r_{\textrm{Dol}}(X)$, but much less so in the cohomology of the Betti moduli space $M^r_{\textrm{Betti}}(X)$. Such phenomena were initially observed over curves by Hausel--Rodriguez-Villegas \cite{HauselRodriguezVillegas} under the heading `curious Poincar\'e duality', which, in an effort to explain these phenomena, then led to the \emph{$P=W$ conjecture} of de Cataldo--Hausel--Migliorini in \cite{dCHM_P=Wn=2}. The latter predicts that the real analytic isomorphism between Betti and Dolbeault moduli space swaps the weight filtration on the cohomology of $M_{\textrm{Betti}}^r(X)$ (coming from mixed Hodge theory) with the perverse filtration on the cohomology of $M_{\textrm{Dol}}^r(X)$ (coming from the Hitchin fibration). 

The \emph{geometric $P=W$ conjecture} of Katzarkov--Noll--Pandit--Simpson \cite{KNPS} is an effort to geometrically explain the surprising equality in the cohomological $P=W$ conjecture by comparing the geometry of neighborhoods at infinity of both moduli spaces that keep track of their quite different  compactifications. It is now known by \cite[Theorem A]{MMS_geometricP=W} that, in the curve case and under some technical assumptions, the geometric $P=W$ conjecture implies the top weight part of the cohomological $P=W$ conjecture.

\subsection*{The cohomological $P=W$ conjecture for compact Riemann surfaces} More concretely, let $X$ be a compact Riemann surface of genus $g\geq 1$ and $p\in X$ a fixed point. Fix a degree $d\geq 0$ and primitive $d$-th root of unity $\mu_d$. The non-abelian Hodge correspondence \cite{Simpson_nonabelianHodge,SimI, SimII} provides us with a real-analytic isomorphism between the Dolbeault moduli space $M_{\textrm{Dol}}^{r,d}(X)$ of semistable Higgs bundles of rank $r\geq 1$ and degree $d$ on $X$ and the Betti moduli space of local systems of rank $r$ on $X$ with fixed monodromy equal to $\mu_d I_r$ around $p$. In other words the latter is the GIT quotient 
\begin{equation*}
M_{\textrm{Betti}}^{r,d}(X)=\big\{A_k, B_k\in\GL_r(\mathbb{C}) \textrm{ for } k=1,\ldots, g\ \big\vert \ [A_1,B_1]\cdots [A_g, B_g]= \mu_d I_r\big\}\sslash_{\GL_r{\C}} \ .
\end{equation*}
This provides us with an identification between the cohomology rings
\begin{equation*}
H^\ast\big(M_{\textrm{Dol}}^{r,d}(X),\Q\big)=H^\ast\big(M_{\textrm{Betti}}^{r,d}(X),\Q\big) 
\end{equation*}
of these a priori (and factually) quite different algebraic varieties.

The $P=W$ conjecture of de Cataldo--Hausel--Migliorini \cite{dCHM_P=Wn=2} predicts an unexpected equality 
\begin{equation*}
P_{k}H^\ast\big(M_{\textrm{Dol}}^{r,d}(X), \Q\big)=W_{2k} H^\ast \big(M_{\textrm{Betti}}^{r,d}(X),\Q\big)=W_{2k+1} H^\ast \big(M_{\textrm{Betti}}^{r,d}(X),\Q\big)
\end{equation*}
between the perverse filtration on $H^\ast\big(M_{\textrm{Dol}}^{r,d}(X),\Q\big)$ associated to the Hitchin fibration, and the weight filtration on $H^\ast\big(M_{\textrm{Betti}}^{r,d}(X),\Q\big)$ in the sense of Deligne \cite{Deligne_HodgeII, Deligne_HodgeIII} (at least when $d$ and $r$ are coprime). The $P=W$ conjecture has been proved in the case $r=2$ and $d=1$ and arbitrary genus $g\geq 2$ in \cite{dCHM_P=Wn=2} and for $g=2$ and arbitrary prime rank $r\geq 1$ coprime to $d$ in \cite{dCMS_g=2pprime}. Two recent preprints \cite{MaulikShen_smoothP=W, HMMS_P=W} contain proofs of the $P=W$ conjecture for all $g\geq 2$ and $r\geq 1$ coprime to $d$ (also see \cite{MSY} for a third proof and \cite{Felisetti_P=Wsurvey} for an overview of recent developments). The case when $r$ and $d$ are not coprime features non-smooth moduli spaces and has its own set of $P=W$ conjectures on the level of intersection cohomology (see e.g.\  \cite{FelisettiMauri} for details).

\subsection*{A cohomological $P=W$ phenomenon for abelian varieties} The main result  of this article is  an analogous $P=W$ phenomenon in the case where we replace the compact Riemann surface by a complex abelian variety $X=\mathbb{C}^g/L$ of dimension $g\geq 1$. We shall see in Sections \ref{section_Dolbeaultmoduli} and \ref{section_nonabelianHodge} below that both the Dolbeault moduli space $M_{\textrm{Dol}}^r(X)$ of topologically trivial rank $r$ semistable Higgs bundles and the Betti moduli space 
\begin{equation*}
M_{\textrm{Betti}}^r(X)=\Hom\big(L,\GL_r(\mathbb{C})\big)\sslash_{\GL_r{\C}}
\end{equation*}
have the structure of a suitable $r$-th symmetric power. This allows us to explicitly write down the non-abelian Hodge correspondence in Section  \ref{section_explicitNAH}, which will give rise to a real-analytic isomorphism  $\eta^r_X\colon M_{\textrm{Dol}}^r(X)\xrightarrow{\sim} M_{\textrm{Betti}}^r(X)$ and, in turn, an identification
\begin{equation}\label{eq_cohomology}
H^\ast\big(M_{\textrm{Dol}}^r(X),\Q\big)=H^\ast\big(M_{\textrm{Betti}}^r(X),\Q\big) \ .
\end{equation}

As we will see in Section \ref{section_Hitchin} below, the Hitchin morphism naturally factors through the so-called \emph{spectral data morphism} 
\begin{equation*}\label{eq_Hitchinfibration}
\sd_X\colon M^r_{\textrm{Dol}}(X)\longrightarrow \Sym^r\C^g
\end{equation*}
(in the terminology of \cite{CN_Hitchin}), whose base is the $r$-th symmetric power of the affine space $\mathbb{C}^g$ (and not an affine space). The spectral data morphism is proper. Its fibers are natural symmetric powers of the dual abelian variety $\widehat{X}$ to $X$. The Dolbeault moduli space naturally carries the structure of a symplectic orbifold making $\sd_X$ into an orbifold Lagrangian fibration (see Proposition \ref{prop_Hitchin} below for details). 

As explained in Section \ref{section_perversefiltration} below, the spectral data morphism allows us to endow $H^\ast\big(M_{\textrm{Dol}}^r(X,\Q)\big)$ with a perverse filtration
\begin{equation*}
P_kH^\ast\big(M_{\textrm{Dol}}^r(X),\Q\big) \quad \textrm{ for } \quad k\in \Z
\end{equation*}
with $P_kH^\ast\big(M_{\textrm{Dol}}^r(X),\Q\big)=0$ for $k< 0$ and $P_kH^\ast\big(M_{\textrm{Dol}}^r(X),\Q\big)=H^\ast\big(M_{\textrm{Dol}}^r(X),\Q\big)$ for $k\geq g$.

On the other side, the Betti moduli space $M_{\textrm{Betti}}^r(X)$ is affine, and has at most quotient singularities. Therefore, using Deligne's classical results \cite{Deligne_HodgeII,Deligne_HodgeIII} the cohomology ring $H^\ast\big(M_{\textrm{Betti}}^r(X),\Q\big)$ carries a natural weight filtration 
\begin{equation*}W_k H^\ast\big(M_{\textrm{Betti}}^r(X),\Q\big) \quad\textrm{ for } \quad k\in\Z 
\end{equation*}
with $W_k=0$ for $k<0$ and $W_k=H^\ast\big(M_{\textrm{Betti}}^r(X),\Q\big)$ for $k\geq 2g$.  

The following Theorem \ref{mainthm_P=W} establishes a (cohomological) $P=W$ phenomenon in our situation.

\begin{maintheorem}[P=W for abelian varieties]\label{mainthm_P=W}
Let $X$ be a complex abelian variety of dimension $g\geq 1$. Then, under the identification \eqref{eq_cohomology} 
induced by the non-abelian Hodge correspondence, we have
\begin{equation*}
P_{k} H^\ast\big(M_{\textrm{Dol}}^r(X),\Q\big)=W_{2k}H^\ast\big(M_{\textrm{Betti}}^r(X),\Q\big) =W_{2k+1}H^\ast\big(M_{\textrm{Betti}}^r(X),\Q\big)
\end{equation*}
for all $k\in\Z$.
\end{maintheorem}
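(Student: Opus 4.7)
The plan is to reduce the statement to the rank-$1$ case by exploiting the symmetric-product decomposition of the two moduli spaces (established in the sections referenced above), and then to verify rank $1$ by a direct computation. As anticipated in the introduction, one has $M_{\textrm{Dol}}^r(X) \cong \Sym^r(\widehat{X}\times\C^g)$ and $M_{\textrm{Betti}}^r(X) \cong \Sym^r\bigl((\C^*)^{2g}\bigr)$, and the non-abelian Hodge map $\eta_X^r$ is the $r$-fold symmetric product of its rank-$1$ counterpart $\eta_X^1$. Under the canonical isomorphism
\[
H^*(\Sym^r Y,\Q) \;\cong\; \bigl(H^*(Y,\Q)^{\otimes r}\bigr)^{S_r}
\]
for a quasi-projective $Y$, both the weight filtration on the Betti side and the perverse filtration induced by the spectral data morphism $\sd_X$ on the Dolbeault side become multiplicative: Deligne's weight filtration is preserved by tensor products and by taking $S_r$-invariants, and under the identification of $\sd_X$ with the $S_r$-quotient of $(\sd_X^1)^{\times r}\colon (\widehat{X}\times\C^g)^r\to(\C^g)^r$ the perverse filtration descends from the product map to the symmetric quotient. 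Consequently, the entire identity is governed by its rank-$1$ restriction.

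For the rank-$1$ case, the spectral data morphism is the trivial projection $\sd_X^1\colon \widehat{X}\times\C^g \to \C^g$, with compact fiber $\widehat{X}$ over a contractible base. The BBD decomposition is then the obvious one coming from Künneth,
\[
R(\sd_X^1)_*\Q \;=\; \bigoplus_{i=0}^{2g} H^i(\widehat{X},\Q)\otimes\Q_{\C^g}[-i],
\]
from which the perverse filtration on $H^*(M_{\textrm{Dol}}^1)=H^*(\widehat{X},\Q)=\Lambda^* H^1(\widehat{X},\Q)$ is readily computed in the normalization introduced in Section~\ref{section_perversefiltration}. On the Betti side, $H^*(M_{\textrm{Betti}}^1)=H^*((\C^*)^{2g},\Q)=\Lambda^* H^1((\C^*)^{2g},\Q)$ is purely of Hodge--Tate type with $H^n$ of pure weight $2n$, so Deligne's weight filtration is likewise determined by the cohomological degree. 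The rank-$1$ non-abelian Hodge correspondence $\eta_X^1$ identifies the two integral $H^1$-lattices with $\Z^{2g}$ and extends canonically to the exterior algebras, and matching the two indices yields the rank-$1$ equality $P_k=W_{2k}=W_{2k+1}$.

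The principal technical issue will be ensuring that the perverse $t$-structure descends correctly across the finite quotient $\pi\colon (\widehat{X}\times\C^g)^r\to\Sym^r(\widehat{X}\times\C^g)$. Since $\pi$ is finite, $\pi_*=R\pi_*$ is perverse-$t$-exact, and the $S_r$-equivariant BBD decomposition of $\bigl(R(\sd_X^1)_*\Q\bigr)^{\boxtimes r}$ descends to its $S_r$-invariant part; this realises the perverse filtration on $H^*(\Sym^r(\widehat{X}\times\C^g))$ as the $S_r$-invariants of the tensor product of rank-$1$ perverse filtrations, matching the analogous behaviour of Deligne's weight filtration on the Betti side. Together with the compatibility of $\eta_X^r$ with the symmetric-product structure from Section~\ref{section_explicitNAH}, this multiplicativity promotes the rank-$1$ identity to the full statement of Theorem~\ref{mainthm_P=W}.
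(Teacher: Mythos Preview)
Your proposal is correct and follows essentially the same strategy as the paper: reduce to rank $1$ via the symmetric-product description of both moduli spaces together with the compatibility of the weight and perverse filtrations with $S_r$-invariants (the paper invokes \cite[Proposition~2.14]{ZHA} for the perverse side, where you argue directly via $t$-exactness of the finite quotient), and then verify the rank-$1$ case by observing that both filtrations are determined by cohomological degree. The only cosmetic difference is that the paper packages the rank-$1$ computations as separate Propositions~\ref{prop_weightproduct} and~\ref{prop_PfiltrationRankone}.
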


A key idea of the proof is to identify both moduli spaces with suitable symmetric products and to use results on the compatibility of symmetric products with both the weight filtration and the perverse filtration. 
Similar techniques also appear in \cite{FelisettiMauri} (in particular in the proof of \cite[Theorem 7.6]{FelisettiMauri}), where the authors study $P=W$ phenomena for character varieties that admit a symplectic reduction and, in particular, the case of $A$ being an elliptic curve. The main difference from this earlier approach is that we make use of the spectral data morphism instead of the Hitchin morphism, which allows us to go beyond the case of a one-dimensional $X$. 

Theorem \ref{mainthm_P=W} together with the results in \cite{FlorentinoSilva} and the relative hard Lefschetz theorem for the $P$-filtration implies the following:

\begin{maintheorem}[Curious Poincar\'e and curious hard Lefschetz]\label{mainthm_curious}
Let $X$ be a complex abelian variety of dimension $g\geq 1$. 
\begin{enumerate}[(i)]
\item Then $M_{\textrm{Betti}}^r(X)$ is of Hodge--Tate type and we have
\begin{equation*}
H\big(M_{\textrm{Betti}}^r(X);\frac{1}{qt^2},t\big)=(qt)^{-2gr}\cdot H\big(M_{\textrm{Betti}}^r(X);q,t)
\end{equation*}
for the mixed Hodge polynomial $H(.;q,t)=H(.;\sqrt{q},\sqrt{q},t)$ of $M_{\textrm{Betti}}^r(X)$. 
\item Let $L$ be a hyperplane class in $H^2\big(M_{\textrm{Dol}}^r(X),\Q\big)\cong H^2\big(M_{\textrm{Betti}}^r(X),\Q\big)$. Then the $k$-fold multiplication with $L$ induces an isomorphism
\begin{equation*}
L^k\colon \Gr_{2gr-2k}^WH^\ast\big(M_{\textrm{Betti}}^r(X),\Q\big)\xlongrightarrow{\sim}\Gr_{2gr+2k}^WH^{\ast+2k}\big(M_{\textrm{Betti}}^r(X),\Q\big) \ .
\end{equation*}
\end{enumerate}
\end{maintheorem}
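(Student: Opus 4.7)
The plan is to derive Theorem \ref{mainthm_curious} from Theorem \ref{mainthm_P=W} combined with the mixed Hodge structure computation of \cite{FlorentinoSilva} and the relative hard Lefschetz theorem applied to the spectral data morphism.

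For part (i), one observes first that, via the symmetric product description $M_{\textrm{Betti}}^r(X) \cong \Sym^r\big((\C^*)^{2g}\big)$ established in Section \ref{section_nonabelianHodge}, the results of \cite{FlorentinoSilva} yield that $M_{\textrm{Betti}}^r(X)$ is of Hodge--Tate type with $H^k$ pure of weight $2k$ (this last property holds for $(\C^*)^{2g}$ by Künneth and is preserved under passing to the symmetric quotient). Consequently, the mixed Hodge polynomial collapses to a polynomial in the single variable $s = qt$, namely $H\big(M_{\textrm{Betti}}^r(X); q, t\big) = f(qt)$, where $f(s)$ is obtained from the Poincar\'e polynomial of $M_{\textrm{Betti}}^r(X)$ by replacing $t$ with $s$. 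The claimed curious Poincar\'e duality identity then reduces to the symmetry
\[
f(1/s) = s^{-2gr}\, f(s),
\]
i.e.\ to classical Poincar\'e duality for some compact $2gr$-real-dimensional Kähler model. Such a model is furnished by $\Sym^r \widehat{X}$: the topological torus underlying $(\C^*)^{2g}$ coincides with the topological torus underlying $\widehat{X}$, so $\Sym^r((\C^*)^{2g})$ is homotopy equivalent to $\Sym^r \widehat{X}$, and the latter is a $gr$-complex-dimensional compact Kähler orbifold whose Poincar\'e polynomial enjoys the required duality.

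For part (ii), the plan is to apply the relative hard Lefschetz theorem of Beilinson--Bernstein--Deligne--Gabber to the proper spectral data morphism $\sd_X \colon M_{\textrm{Dol}}^r(X) \to \Sym^r \C^g$ from Proposition \ref{prop_Hitchin}. Fixing a relatively ample class $L$, the theorem produces isomorphisms
\[
L^k \colon \Gr^P_{gr - k} H^{*}\big(M_{\textrm{Dol}}^r(X), \Q\big) \xlongrightarrow{\sim} \Gr^P_{gr + k} H^{*+2k}\big(M_{\textrm{Dol}}^r(X), \Q\big)
\]
on the graded pieces of the perverse filtration. Theorem \ref{mainthm_P=W} gives $\Gr^P_m H^* = \Gr^W_{2m} H^*$ (with $\Gr^W_{2m+1} = 0$) under the non-abelian Hodge identification \eqref{eq_cohomology}, and substitution yields precisely the curious hard Lefschetz isomorphism on the Betti side.

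The main obstacle I expect is justifying BBDG in the orbifold setting, since $M_{\textrm{Dol}}^r(X)$ has finite quotient singularities. The natural options are either to pass to a smooth $S_r$-cover such as $(\widehat{X} \times \C^g)^r \to M_{\textrm{Dol}}^r(X)$ (descending the statement via invariants, using that $\sd_X$ factors through this cover) or to appeal to an orbifold/stacky version of the decomposition theorem. One also has to pin down a concrete relatively ample class; a natural candidate comes from a principal polarization on $\widehat{X}$, whose induced class on the generic fiber $\Sym^r \widehat{X}$ extends to an $\sd_X$-ample class on the total space.
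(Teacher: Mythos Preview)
Your argument for Part (ii) is exactly the paper's: apply the relative hard Lefschetz isomorphism \eqref{eq_relativehardLefschetz} to the spectral data morphism and then translate via Theorem \ref{mainthm_P=W}. For the orbifold issue you flag, the paper does not pass to the cover $(\widehat{X}\times\C^g)^r$; instead, in the remark following the statement of the decomposition theorem in Section \ref{section_decomposition}, it invokes rational smoothness of finite-quotient singularities (so that $IC_X\cong\Q_X[\dim X]$) to apply BBDG directly. Either route works.

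For Part (i) you take a genuinely different path. The paper quotes the explicit formula
\[
H\big(M_{\textrm{Betti}}^r(X);q,t\big)=\frac{1}{r!}\sum_{\sigma\in S_r}\big(\det(I_r-tqA_\sigma)\big)^{2g}
\]
from \cite{FlorentinoSilva} and verifies the functional equation by a two-line determinant manipulation. Your route is more geometric: you observe (correctly) that Hodge--Tate plus purity of $H^j$ in weight $2j$ forces $H(q,t)=f(qt)$ with $f$ the Poincar\'e polynomial, and then reduce the functional equation to classical Poincar\'e duality for the compact rational-homology manifold $\Sym^r\widehat{X}$, which is homotopy equivalent to $\Sym^r\big((\C^\ast)^{2g}\big)$ via the fiberwise retraction $(\C^\ast)^{2g}\to (S^1)^{2g}$. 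This is a valid and arguably more illuminating argument, since it identifies the ``curious'' duality literally with Poincar\'e duality of the generic fiber of $\sd_X$; the paper's computation, on the other hand, is self-contained once the Florentino--Silva formula is in hand and does not require the extra step of identifying the compact model.
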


Theorem \ref{mainthm_curious} is an analogue of the curious Poincar\'e duality and the curious hard Lefschetz theorem that have originally been discovered in \cite{HauselRodriguezVillegas} in the rank $2$ case. 

\subsection*{A geometric $P=W$ phenomenon for  abelian varieties} 
In \cite{KNPS} the authors propose a geometric incarnation of the $P=W$ phenomenon on a compact Riemann surface that suggests a correspondence between the geometry at infinity of both the Dolbeault and the Betti moduli (see \cite[Section 5]{Migliorini_survey} and \cite{MMS_geometricP=W} for more details). 

Let $X$ be a complex abelian variety of dimension $g$. Based on our explicit description of the non-abelian Hodge correspondence in Section \ref{section_explicitNAH} below, we establish a geometric $P=W$ phenomenon on $X$.

As we shall see in Section \ref{section_Hitchin} below, and inspired by \cite{Hausel_compactification, Simpson_Hodgefiltration, deCataldo_compactification}, the Dolbeault moduli space $M_{\textrm{Dol}}^r(X)$ admits a compactification $\Mbar_{\textrm{Dol}}^r(X)$ together with a regular extension of the spectral data morphism. We write $N_{\textrm{Dol}}^r(X)$ for the intersection of a suitable open neighborhood of the boundary of $\Mbar_{\textrm{Dol}}^r(X)$ and we will refer to $N_{\textrm{Dol}}^r(X)$ as a \emph{neighborhood of infinity}. We also point out that the spectral data base $\Sym^r \big(\C^g\big)$ of $M_{\textrm{Dol}}^r(X)$ naturally contains the sphere quotient $S^{2gr-1}/S_r$.

In general, the Betti moduli space $M_{\textrm{Betti}}^r(X)$ does not admit a standard compactification. Thanks to the results of \cite{Stepanov, Thuillier}, however, the homotopy type of the dual complex of a normal crossing compactification of $M_{\textrm{Betti}}^r(X)$ does not depend on the choice of the compactification (see \cite[Theorem 1.2]{Harper} for an orbifold version). Since the Betti moduli space $M_{\textrm{Betti}}^r(X)$ is an open Calabi-Yau orbifold, we can do even better: Once we know $M_{\textrm{Betti}}^r(X)$ admits a divisorial log terminal (dlt) compactification, all other dlt compactifications that are crepant birational have the same dual complex up to piecewise linear homoemorphism (by \cite[Proposition 11]{dFKX_dualcomplexes}). So, by a slight abuse of notation, it makes sense to denote the dual complex simply by $\Delta\big(M_{\textrm{Betti}}^r(X)\big)$. It has been observed in \cite[Section 2]{MMS_geometricP=W} that we may identify $\Delta\big(M_{\textrm{Betti}}^r(X)\big)$ with the link of the essential skeleton of $M_{\textrm{Betti}}^r(X)$ in the sense of \cite{KontsevichSoibelman, MustataNicaise, NicaiseXuYu}. As above, we write $N_{\textrm{Betti}}^r(X)$ for a suitable neighborhood at infinity.

\begin{maintheorem}[Geometric $P=W$ for abelian varieties]\label{mainthm_geometricP=W}
Let $X$ be a complex abelian variety of dimension $g\geq 1$. The Betti moduli space $M_{\textrm{Betti}}^r(X)$ admits a dlt compactification and there is natural piecewise linear homoemorphism between the dual complex $\Delta\big(M_{\textrm{Betti}}^r(X)\big)$ and $S^{2gr-1}/S_r$ that makes the natural diagram
\begin{equation}\label{eq_geometricP=W}\begin{tikzcd}
N_{\textrm{Dol}}^r(X) \arrow[rr,"\sim","\eta_X^r"']\arrow[d,"\widetilde{\sd}_X"']&& N_{\textrm{Betti}}^r(X) \arrow[d,"\rho"]\\
S^{2gr-1}/S_{r}\arrow[rr,"\sim"]&& \Delta\big(M_{\textrm{Betti}}^r(X)\big)
\end{tikzcd}\end{equation}
commute, where:
\begin{itemize}
\item $N_{\textrm{Dol}}^r(X)$ and $N_{\textrm{Betti}}^r(X)$ are suitable neighborhoods of infinity on which $\eta_X^r$ is a homeomorphism;
\item the map $\widetilde{\sd}_X$ is the composition of the spectral data morphism $\sd_X$ with the  retraction from $\Sym^r(\C^g)-\big\{[0]\big\}$ to $S^{2gr-1}/S_r$ along rays through the origin; and 
\item the map $\rho$ is a suitable retraction onto $\Delta\big(M_{\textrm{Betti}}^r(X)\big)$.
\end{itemize}
\end{maintheorem}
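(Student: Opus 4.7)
The approach is to exploit the symmetric-product descriptions
\[
M_{\textrm{Dol}}^r(X) \;\cong\; \Sym^r\bigl(\widehat{X}\times\C^g\bigr),
\qquad
M_{\textrm{Betti}}^r(X) \;\cong\; \Sym^r\bigl((\C^\times)^{2g}\bigr),
\]
obtained in Sections \ref{section_Dolbeaultmoduli} and \ref{section_nonabelianHodge}, together with the rank-one non-abelian Hodge correspondence $\widehat{X}\times\C^g\xrightarrow{\sim}(\C^\times)^{2g}$ made explicit in Section \ref{section_explicitNAH}. Under these identifications both $\widetilde{\sd}_X$ and $\rho$ will arise from the same real factor $\R^{2g}\subset(\C^\times)^{2g}$, so the geometric $P=W$ diagram can be verified by a direct unpacking of the symmetric-product structure.

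First, I would construct the dlt compactification and compute its dual complex. The smooth toric embedding $(\C^\times)^{2g}\hookrightarrow(\P^1)^{2g}$ has SNC boundary and is log Calabi--Yau. Taking the $r$-fold product $((\P^1)^{2g})^r$ and quotienting by the $S_r$-action that permutes factors yields a toroidal compactification $\Sym^r((\P^1)^{2g})$ of $M_{\textrm{Betti}}^r(X)$ with only finite abelian quotient singularities; an $S_r$-equivariant toric log resolution then produces an honest dlt compactification. Since any two such compactifications are crepant birational log CY pairs, \cite[Proposition 11]{dFKX_dualcomplexes} yields a well-defined dual complex $\Delta(M_{\textrm{Betti}}^r(X))$ up to PL homeomorphism. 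The dual complex of $(\P^1)^{2g}$ with its SNC boundary is $S^{2g-1}$; the $r$-fold join computing the dual complex of $((\P^1)^{2g})^r$ is thus $S^{2gr-1}$, and the $S_r$-quotient is $S^{2gr-1}/S_r$. Equivalently, the essential skeleton of $M_{\textrm{Betti}}^r(X)$ is $\Sym^r(\R^{2g})=\R^{2gr}/S_r$, whose link at the origin is $S^{2gr-1}/S_r$; by \cite{NicaiseXuYu, MMS_geometricP=W} this link may be identified with $\Delta(M_{\textrm{Betti}}^r(X))$.

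Next, I would check commutativity of \eqref{eq_geometricP=W} by tracing both routes. On the Dolbeault side $\widetilde{\sd}_X$ reduces, via the symmetric-product identification, to the projection $\Sym^r(\widehat{X}\times\C^g)\to\Sym^r(\C^g)$ followed by radial retraction to $S^{2gr-1}/S_r$. On the Betti side the retraction $\rho$ is induced by the character log-norm $\chi\mapsto\log|\chi|$, which sends $\Sym^r((\C^\times)^{2g})$ to $\Sym^r(\R^{2g})$, again followed by radial retraction to $S^{2gr-1}/S_r$. The explicit formula of Section \ref{section_explicitNAH} shows that rank-one NAH sends $(\alpha,\phi)\in\widehat{X}\times\C^g$ to a character $\chi$ whose $\log|\chi|\in\R^{2g}$ depends only on $\phi$ and is $\R$-linear in $\phi$. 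This gives a fixed real-linear isomorphism $\C^g\cong\R^{2g}$ identifying the Higgs-field projection with the log-norm map, and, after passage to symmetric products, intertwines the two radial retractions on suitable neighborhoods of infinity.

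The main obstacle lies in this last step: choosing $N_{\textrm{Dol}}^r(X)$ and $N_{\textrm{Betti}}^r(X)$ compatibly with the $S_r$-quotient and with the Euclidean/Hermitian structures used on each side, so that $\eta_X^r$ restricts to a homeomorphism between them and the two radial retractions genuinely agree rather than merely being homotopic. This requires careful book-keeping of the rank-one NAH formulas from Section \ref{section_explicitNAH}, of the Hodge decomposition $H^1(X,\C)\cong H^{1,0}(X)\oplus\overline{H^{1,0}(X)}$ providing the identification $\C^g\cong\R^{2g}$, and of the symmetric-product construction near the deep diagonals.
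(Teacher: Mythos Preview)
Your approach is essentially the same as the paper's: both exploit the symmetric-product descriptions, a toric compactification of $(\C^\times)^{2g}$, and the explicit rank-one NAH diagram to match the two retractions. Two points deserve comment.

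First, the step ``the $S_r$-quotient is $S^{2gr-1}/S_r$'' hides a genuine lemma. It is not automatic that the dual complex of a quotient variety equals the quotient of the dual complex. The paper handles this by observing that the $S_r$-action on $Z^r$ (for $Z$ a smooth toric compactification of $(\C^\times)^{2g}$) is free in codimension one, and then invoking \cite[Theorem 3.0.7]{MMS_geometricP=W}; the existence of the dlt model itself comes from \cite[Lemma 4.3.1]{MMS_geometricP=W} applied to the log canonical pair $\Sym^r(Z)$, rather than from an ad hoc equivariant toric resolution. Your join argument for the product and the essential-skeleton description are fine heuristics, but the codimension-one freeness is the actual input that makes the quotient identification rigorous.

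Second, the ``main obstacle'' you raise is not one. The paper simply \emph{defines} $N_{\textrm{Dol}}^r(X)$ and $N_{\textrm{Betti}}^r(X)$ as the preimages of $\Sym^r(\C^g)\setminus\{[0]\}$ and $\Sym^r(\R^{2g})\setminus\{[0]\}$ under $\sd_X$ and $\Sym^r(-\log|\,\cdot\,|)$ respectively. Proposition~\ref{prop_nonAbelianHodge} then gives on the nose that $\eta_X^r$ carries one to the other and intertwines $\sd_X$ with $\Sym^r(-\log|\,\cdot\,|)$ via the fixed real-linear isomorphism $\C^g\cong\R^{2g}$. The retraction along rays through the origin depends only on this linear structure, not on a choice of Hermitian versus Euclidean norm, so no further book-keeping near the diagonals is needed: the diagram commutes strictly, not merely up to homotopy.
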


In the case of a compact Riemann surface the arrows in the  diagram analogous to \eqref{eq_geometricP=W} are only homotopy equivalences and the retraction $\rho$ is only well-defined up to homotopy. For abelian varieties Diagram \eqref{eq_geometricP=W} commutes without the need for homotopy; heuristically this comes from the fact that the same is true in the case of elliptic curves (see \cite[Section 5]{MMS_geometricP=W}) and the case of abelian varieties generalizes this situation.

\subsection*{Towards $P=W$ phenomena in higher dimension} So far, $P=W$ phenomena seem to have only appeared for various versions of Dolbeault and Betti moduli spaces on compact Riemann surfaces, and for generalizations of these moduli spaces, namely for certain types of cluster varieties \cite{Zhang_cluster} and hyperk\"ahler varieties \cite{HLSY_hyperKaehlerP=W} (also see \cite[Section 4.4]{dCHM_exchange} for some speculations in that direction). 

Theorem \ref{mainthm_P=W} and \ref{mainthm_geometricP=W} give a complete description of the nature of the cohomological and the geometric $P=W$ phenomenon when $X$ is an abelian variety. They form the first step towards  the question whether $P=W$ phenomena occur for Betti/Dolbeault moduli spaces of other algebraic varieties.

The following crucial points need to be considered when searching for such a generalization:

\begin{enumerate}

\item The factorization of the Hitchin fibration into the spectral data morphism is a special case of a factorization of the Hitchin morphism for higher-dimensional varieties coming from \cite{CN_Hitchin}. An in-depth  geometric understanding of spectral data morphism will be of central importance.

\item The appearance of a sphere in the Hitchin base, as phrased in \cite{KNPS}, appears to be particular to the curve case. In Remark \ref{remark_notasphere} below we explain why $S^{2gr-1}/S_r$ is not a sphere for $g,r\geq 2$, but only a $\Q$-homology sphere. 
This aligns with a general conjecture of Koll\'ar and Xu \cite[Question 4]{KollarXu} on the homotopy type of dual complexes of open Calabi-Yau varieties.

\item Theorem \ref{mainthm_geometricP=W} could be phrased in terms of essential skeletons and, in general, the non-Archimedean approach to the SYZ fibration in the sense of Kontsevich--Soibelman \cite{KontsevichSoibelman} (also see \cite{MustataNicaise, NicaiseXuYu, MMS_geometricP=W}). This illustrates the importance of interactions with techniques coming from SYZ mirror symmetry.
\end{enumerate}

What the case of compact Riemann surfaces (of genus $\geq 1$) and abelian varieties have in common is that they are both $K(\pi,1)$-varieties, i.e.\ complex algebraic varieties whose universal cover is contractible. 

\begin{mainquestion}
Do $P=W$ phenomena occur for Betti/Dolbeault moduli spaces over $K(\pi,1)$-varieties? 
\end{mainquestion}

The cohomology of a $K(\pi,1)$-variety is naturally isomorphic to the group cohomology of its fundamental group. So, heuristically, one may think of $K(\pi,1)$-varieties as those varieties whose topology is determined by their fundamental group. We believe that $K(\pi,1)$-varieties form a natural class of higher-dimensional algebraic varieties, on which the search for $P=W$ phenomena could be successful.

\subsection*{Acknowledgements} We thank Emilio Franco, Andreas Gross, Johannes Horn, Inder Kaur, Martina Lanini, Mirko Mauri, Enrica Mazzon, and Annette Werner for insightful discussions and remarks at different stages of the project. Particular thanks are due to Camilla Felisetti for allowing us to discuss the details of our approach with her and to Matthias Kreck, who communicated the argument in Remark \ref{remark_notasphere} to us. 

This project has received funding from the Deutsche Forschungsgemeinschaft  (DFG, German Research Foundation) TRR 326 \emph{Geometry and Arithmetic of Uniformized Structures}, project number 444845124, from the Deutsche Forschungsgemeinschaft (DFG, German Research Foundation) Sachbeihilfe \emph{From Riemann surfaces to tropical curves (and back again)}, project number 456557832, and from the LOEWE grant \emph{Uniformized Structures in Algebra and Geometry}.



\section{The Dolbeault moduli space on a complex abelian variety}\label{section_Dolbeaultmoduli}

In this section, we will give an explicit description of the moduli space of semistable Higgs bundles on an abelian variety. 

\subsection{Preliminaries on Higgs bundles and (semi-)stability}

Let $X$ be an $n$-dimensional smooth projective complex algebraic variety. We write $\Omega^1_X$ for the cotangent bundle of $X$ and always implicitly fix a hyperplane class $H$. Our results will not depend on the choice of $H$.

\begin{definition}
A \textit{Higgs sheaf} on $X$ is  a pair $(E,\phi)$, where $E$ is a coherent sheaf on $X$, and $\phi\in H^0(X, \calE nd(E)\otimes \Omega ^1_X)$ such that $\phi \wedge \phi =0$. Whenever $E$ is locally free, we will call such a pair a {\it Higgs bundle}.
\end{definition}

The natural notion of stability for Higgs sheaves is very similar to the notion of slope stability for torsion-free coherent sheaves. Recall that, if $E$ is a torsion-free sheaf, its {\it slope} (with respect to $H$) is defined as 
\[ 
\mu(E) =   \mu_H(E) = \frac{\text{deg}(E)}{\text{rk}(E)} = \frac{c_1(E)\cdot H^{n-1}}{\text{rk}(E)}  \ .
\]

\begin{definition}
A subsheaf $F \subset E$ is said to be \emph{$\phi$-invariant} if $\phi(F) \subset F\otimes \Omega^1_X$. A Higgs sheaf is said to be \emph{semistable}, if $\mu(F) \leq \mu(E)$. If, in addition, we have $\mu(F) < \mu(E)$ for any nonzero $\phi$-invariant subsheaf $F\subsetneq E$, we say that $E$ is  \emph{stable}. Finally, $(E, \phi)$ is called \emph{polystable} if it is the direct sum of stable $\phi$-invariant subsheaves, all of the same slope.
\end{definition}

In what follows we say that a vector bundle $E$ on $X$ is \emph{topologically trivial}, if all of its Chern classes in $H^\ast(X,\Q)$ 
are trivial.

\begin{remark}
    If a semistable torsion free Higgs sheaf $(E,\phi)$ is topologically trivial, then $E$ is a bundle by \cite[Proposition 6.6]{SimII}; it is in fact an extension of stable Higgs bundles whose Chern classes all vanish. Any sub-Higgs sheaf of slope zero is a strict subbundle with vanishing Chern classes. By \cite[Proposition 6.6]{SimII} we also know that for topologically trivial Higgs bundles  Gieseker (semi-)stability is equivalent to slope (semi-)stability. We work with slope (semi-)stability for the sake of definiteness; but all of our results remain valid for Gieseker (semi-)stability as well.
\end{remark}

A crucial tool when working with Higgs sheaves are the following two filtrations that filter our sheaves by semistable or respectively stable quotients:

\begin{itemize}
\item Let $E$ be a torsion-free Higgs sheaf. The {\it Harder--Narasimhan filtration of $E$}  is the unique filtration
\begin{equation*}
    0 = F_0 \subsetneq F_1 \subsetneq \ldots\subsetneq F_{s-1} \subsetneq F_s = E
\end{equation*}
with $\phi$-invariant subsheaves $F_i$ and semistable subquotients $E_i:=F_i/F_{i=1}$ for $1\leq i\leq s$ such that
$\mu(E_i)>\mu(F_j)$ whenever $i<j$.

\item Let $F$ be a semistable torsion-free Higgs sheaf. The  {\it Jordan--Hölder of $E$} filtration is the filtration
\begin{equation*}
    0 = G_0 \subsetneq G_1 \subsetneq \ldots\subsetneq G_{s-1} \subsetneq G_s = F
\end{equation*}
with $\phi$-invariant subsheaves $G_i$, and semistable subquotients $H_i:=G_i/G_{i=1}$ for $1\leq i\leq s$ such that
$\mu(H_i)=\mu(H_j)$ for each $i$ and $j$.
\end{itemize}

Two semistable Higgs sheaves $E,E'$ are called \emph{S-equivalent}, written as $E\sim_SE'$, if their respective Jordan--Hölder factors are isomorphic. Every semistable Higgs bundle is S-equivalent to a polystable Higgs bundle: indeed, if $(F, \phi)$ is a semistable Higgs bundle with Jordan--Hölder factors $H_1,...,H_n$, then it is S-equivalent to the polystable Higgs bundle $(H_1, \phi|_{H_1})\oplus ... \oplus (H_n, \phi|_{H_n})$.

 \subsection{(Semi-)Stability of Higgs bundles on abelian varieties} In this section we give a complete characterization of topologically trivial semistable Higgs bundles on complex abelian varieties.

\begin{proposition}\label{prop_semistableHiggs}
Let $X$ be a complex abelian variety of dimension $g$. Then a Higgs bundle $(E,\Phi)$ on $X$ is semistable if and only if the underlying vector bundle $E$ is semistable. 
\end{proposition}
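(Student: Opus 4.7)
\medskip

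\noindent\textbf{Proof plan.} The crucial structural input is that on an abelian variety $X$ the cotangent bundle is trivial: $\Omega^1_X\cong \calO_X^{\oplus g}$. One direction of the equivalence is then immediate: if $E$ is semistable as a vector bundle, then every $\phi$-invariant subsheaf $F\subset E$ is in particular a subsheaf of $E$, hence satisfies $\mu(F)\leq \mu(E)$, so $(E,\phi)$ is a semistable Higgs bundle. This uses nothing special about $\Omega^1_X$.

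For the nontrivial direction, I would argue by contraposition. Assume $E$ is not semistable and let
\[
0=F_0\subsetneq F_1\subsetneq\cdots\subsetneq F_s=E
\]
be its Harder--Narasimhan filtration as a sheaf. Then $F_1$ is semistable with $\mu_1:=\mu(F_1)>\mu(E)$, and every subsheaf of $E/F_1$ has slope at most $\mu_2<\mu_1$, where $\mu_2$ is the slope of the first HN piece of $E/F_1$. The goal is to show that $F_1$ is automatically $\phi$-invariant, which would contradict semistability of $(E,\phi)$.

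To this end, consider the $\calO_X$-linear composition
\[
\psi\colon F_1\hooklongrightarrow E\xlongrightarrow{\phi} E\otimes\Omega^1_X\twoheadlongrightarrow (E/F_1)\otimes\Omega^1_X \ .
\]
Via the trivialization $\Omega^1_X\cong \calO_X^{\oplus g}$, the map $\psi$ decomposes into $g$ components $\psi_i\colon F_1\to E/F_1$. Each $\psi_i$ sends the semistable sheaf $F_1$ of slope $\mu_1$ into $E/F_1$; its image is simultaneously a quotient of $F_1$, hence of slope $\geq \mu_1$, and a subsheaf of $E/F_1$, hence of slope $\leq \mu_2<\mu_1$. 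The only way to reconcile these bounds is $\psi_i=0$ for all $i$. Consequently $\psi=0$, i.e.\ $\phi(F_1)\subset F_1\otimes \Omega^1_X$, so $F_1$ is a $\phi$-invariant subsheaf with $\mu(F_1)>\mu(E)$, contradicting semistability of $(E,\phi)$.

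The only delicate point is the slope comparison argument, which relies on two standard facts: that quotients of a semistable sheaf have slope at least that of the sheaf, and that subsheaves of a torsion-free sheaf have slope at most that of the first HN piece. Everything else is a direct consequence of the triviality of $\Omega^1_X$ on the abelian variety; this is precisely what makes the statement fail on a general projective variety and makes the proof a short matter of stability bookkeeping rather than genuine Higgs-theoretic work.
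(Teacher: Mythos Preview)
Your argument is correct and follows essentially the same route as the paper's proof: both exploit the triviality of $\Omega^1_X$ on the abelian variety to show that the maximal destabilizing subsheaf $F_1$ of the Harder--Narasimhan filtration is automatically $\phi$-invariant via a slope comparison. The only cosmetic difference is that the paper locates the smallest index $l$ with $\phi(F_1)\subseteq F_l^{\oplus g}$ and then uses $\Hom(E_1,E_l)=0$ for $l>1$, whereas you pass directly to the quotient $E/F_1$ and bound the slope of the image of each component $\psi_i$; these are equivalent bookkeeping choices.
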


Proposition \ref{prop_semistableHiggs} expands on Proposition 4.1 of \cite{FGPN}, which deals with the case of an elliptic curve. Before we start the proof, we note that the following: If $F\subseteq E$ is a subbundle, then by definition $F$ is $\phi$-invariant if $\phi(F)\subseteq F\otimes \Omega_X^1$. Since on an abelian variety we have $\Omega_X^1\cong \calO_X^{\oplus g}$ this condition becomes
\[
\phi(F)\subseteq F^{\oplus g}\subseteq E^{\oplus g}\ . 
\]

\begin{proof}[Proof of Proposition \ref{prop_semistableHiggs}]
If $E$ is semistable then every subbundle has slope at most that of $F$, hence the Higgs bundle $(E,\Phi)$ is necessarily semistable. 

Conversely, assume that $E$ is \emph{not} semistable, let us write 
\[
0 = F_0 \subsetneq F_1 \subsetneq \ldots\subsetneq F_{s-1} \subsetneq F_s = E  
\]
for its Harder--Narasimhan filtration. Recall that the subquotients $E_i:=F_i/F_{i=1}$ for $1\leq i\leq s$ are all semistable and 
$\mu_\alpha(E_i)>\mu_\alpha(F_j)$ whenever $i<j$. In addition
\[
H^0(X,\Hom_{\calO_X}(E_i,E_j)) \,=\, 0\ \ \text{for all $i<j$.}
\]
Consider the maximal destabilizing subbundle $F_1$ of $E$. If $F_1\subseteq\ker \phi$ then $F_1$ is $\Phi$-invariant with slope strictly larger than that of $E$, therefore the Higgs bundle$(E,\phi)$ is not semistable. We can therefore assume without loss of generality that $\Phi(F_1)\neq 0$. Let $0\leq l\leq s$ be the smallest index for which
$\phi(F_1)\, \subseteq\, F_l$ but also $\phi(F_1)\,\not\subseteq\, F_{l-1}$. 
By the choice of $l$, the Higgs field $\phi$ induces a non-zero morphism 
\[
\phi_1 \colon E_1 = F_1 \longrightarrow \left(F_l^{\oplus g}\right) / \left(F_{l-1}^{\oplus g}\right) %
\cong \left( F_l/F_{l-1}\right)^{\oplus g} = E_l^{\oplus g} 
\]
However, the right-hand side is semistable with slope/Hilbert polynomial  
\[
\mu \left(E_l^{\oplus g}\right) = \mu(E_l) < \mu(E_1)\ ,
\]
which is impossible unless $l=1$. This implies $\phi(F_1)\subseteq F_1^{\oplus g}$, which means that $F_1$ is $\phi$-invariant.
\end{proof}

\begin{proposition}\label{prop_stableHiggs}
A topologically trivial semistable Higgs bundle on a complex abelian variety $X$ is stable if and only if it is of rank one. 
\end{proposition}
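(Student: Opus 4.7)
The $r=1$ direction is automatic: for a topologically trivial line bundle $L$, any proper $\phi$-invariant subsheaf whose slope equals $\mu(L)=0$ would have to be a strict subbundle with vanishing Chern classes (by the remark preceding Proposition~\ref{prop_semistableHiggs}), and no such proper subbundle exists in rank one. The bulk of the proof therefore aims to rule out the existence of a stable topologically trivial Higgs bundle $(E,\phi)$ of rank $r\geq 2$. By Proposition~\ref{prop_semistableHiggs} the underlying vector bundle $E$ is semistable, and $c_1(E)=0$ gives $\mu(E)=0$. The plan is to exploit the triviality of $\Omega^1_X$ to decompose $\phi$ into commuting endomorphisms and then to exhibit a proper $\phi$-invariant subsheaf of slope zero, contradicting stability.

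After fixing a basis $\omega_1,\ldots,\omega_g$ of $H^0(X,\Omega^1_X)$ I would write $\phi=\sum_i\phi_i\otimes\omega_i$ with $\phi_i\in \End(E)$; the integrability condition $\phi\wedge\phi=0$ becomes $[\phi_i,\phi_j]=0$. Every trace $\mathrm{tr}(\phi_{i_1}\cdots\phi_{i_k})$ is a global section of $\mathcal{O}_X$, hence a constant, so the characteristic polynomial of any linear combination $\psi=\sum_i t_i\phi_i$ has constant coefficients. For generic $t_i$ the eigenvalues of $\psi$ separate the joint eigenvalues $\lambda^{(1)},\ldots,\lambda^{(s)}\in\C^g$ of the commuting family $\{\phi_i\}$, and the generalized eigenspace decomposition of $\psi$---which consists of subbundles of $E$ because the characteristic polynomial is constant---agrees with the joint generalized eigenspace decomposition
\[
E=\bigoplus_{j=1}^{s}E_{\lambda^{(j)}} \ .
\]
Each summand is $\phi$-invariant since every $\phi_i$ commutes with $\psi$. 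If $s\geq 2$, the semistability bounds $\mu(E_{\lambda^{(j)}})\leq 0$ together with $\sum_j c_1(E_{\lambda^{(j)}})=0$ force each $\mu(E_{\lambda^{(j)}})=0$, producing proper $\phi$-invariant slope-zero subsheaves and contradicting stability.

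This reduces the problem to the case of a single joint eigenvalue $\lambda^0\in\C^g$. Replacing $\phi$ by $\phi-\lambda^0\otimes\id_E$ preserves the set of $\phi$-invariant subsheaves, so I may assume every $\phi_i$ is nilpotent. If $\phi=0$, any subsheaf is $\phi$-invariant and the first step of the Jordan--H\"older filtration of $E$ supplies the required proper slope-zero subsheaf. Otherwise some $\phi_i\neq 0$, and then $\ker(\phi_i)\subsetneq E$ is nonzero (by nilpotence) and $\phi$-invariant (because each $\phi_j$ commutes with $\phi_i$); the exact sequence $0\to\ker(\phi_i)\to E\to\mathrm{im}(\phi_i)\to 0$, combined with the semistability bounds $\mu(\ker\phi_i)\leq 0$ and $\mu(\mathrm{im}(\phi_i))\leq 0$ and the equality $\deg(\ker\phi_i)+\deg(\mathrm{im}(\phi_i))=0$, forces both slopes to vanish, again producing a destabilizer.

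The main obstacle will be the nilpotent case: the commuting eigenspace decomposition is trivial there and one has to produce a $\phi$-invariant subsheaf by hand. The key leverage is that the semistability of $E$ together with $c_1(E)=0$ pins down the slope of $\ker(\phi_i)$ at zero as soon as this kernel is known to be proper and nonzero---which is exactly what nilpotence of a nonzero $\phi_i$ guarantees.
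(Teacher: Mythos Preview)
Your approach is quite different from the paper's and is largely sound, but there is one genuine gap.

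\textbf{Comparison with the paper.} The paper proceeds structurally: it invokes the fact that topologically trivial semistable bundles on an abelian variety are homogeneous, and then uses the classification of homogeneous bundles $E\cong\bigoplus_i L_i\otimes U_i$ to produce $\phi$-invariant summands step by step. Your argument is instead pure linear algebra over $\mathcal{O}_X$: you exploit $\Omega^1_X\cong\mathcal{O}_X^g$ to write $\phi$ as a tuple of commuting endomorphisms with constant characteristic polynomials, take a joint generalized eigenspace decomposition (realized as images of polynomial idempotents in a generic combination $\psi$), and in the nilpotent case use $\ker(\phi_i)$ as a destabilizer. This is elegant and uses nothing about abelian varieties beyond the triviality of $\Omega^1_X$ and $H^0(\mathcal{O}_X)=\C$.

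\textbf{The gap.} Precisely because your argument uses no abelian-variety-specific input, it cannot close the case $\phi=0$. There you write that ``the first step of the Jordan--H\"older filtration of $E$ supplies the required proper slope-zero subsheaf,'' but if $E$ happens to be stable as a plain vector bundle, its Jordan--H\"older filtration is $0\subsetneq E$ and there is no proper step. In other words, you are tacitly assuming that a topologically trivial semistable bundle of rank $\geq 2$ on $X$ is never stable. That statement is true, but it is not free: it is equivalent to the $\phi=0$ instance of the very proposition you are proving, and it requires either the homogeneous-bundle classification (as in the paper) or a Narasimhan--Seshadri/Donaldson--Uhlenbeck--Yau type argument (irreducible unitary representations of the abelian group $\pi_1(X)$ are one-dimensional). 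Once you plug in one of these facts for the $\phi=0$ case, your proof is complete; without it, the argument is circular at that point.
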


Our proof is, in parts, a generalization of \cite[Proposition 4.3]{FGPN}, which proves an analogous result in the case when $X$ is an elliptic curve.

\begin{proof}[Proof of Proposition \ref{prop_stableHiggs}]
Let $(E,\phi)$ be an topologically trivial stable Higgs bundle on $X$. By Proposition \ref{prop_semistableHiggs} the underlying vector bundle $E$ is a semistable vector bundle. By \cite{MehtaNori, Langer_boundedness} topologically trivial semistable vector bundles on $X$ are exactly the homogeneous bundles on $X$; so we have 
\[
t_x^\ast E\cong E
\]
for all translation maps $t_x$ by $x\in X$. The  classification of homogeneous vector bundles  \cite{Matsushima,Miyanishi, Morimoto, Mukai_semihomogeneous} now tells us that
\begin{equation}\label{eq_homogeneousdecomposition}
    E\cong \bigoplus_i L_i\otimes U_i
\end{equation}
where the $L_i\in\Pic_0(X)$ are line bundles with $c_1(L_i)=0$, and the $U_i$'s are unipotent vector bundles. 

We may rewrite \eqref{eq_homogeneousdecomposition} as 
\begin{equation*}
    E\cong \bigoplus_j L_j'\otimes U_j'
\end{equation*}
where $L'_j$ and $L_k'$ are not isomorphic  whenever $j\neq k$ and $U_j'$ is a direct sum of unipotent bundles. Observe that 
\begin{equation}\label{eq_usefuldecomposition}
    H^0\big(X,\Hom(L_j,L_k)\big)=H^0(X,L_j^\ast\otimes L_k)=0
\end{equation}
whenever $j\neq k$, since all Chern classes of $L_j^\ast\otimes L_k$ are trivial, but $L_j^\ast \otimes L_k$ is not the trivial bundle. 

Hence the homomorphism $\varphi\colon E\rightarrow E$ decomposes into a direct sum of endomorphisms $\phi_i\colon L_j'\otimes U_j'\rightarrow L_j'\otimes U_j'$. This contradicts the stability of the Higgs bundle $(E,\phi)$ unless $s=1$. Therefore we have 
\begin{equation*}
    E\cong L\otimes U
\end{equation*}
where $L\in\Pic_0(X)$ and $U$ is a direct sum of unipotent bundles. 

There are natural isomorphisms
\begin{equation*}
    \End(E)\cong U^\ast\otimes L^\ast\otimes L\otimes U\cong U^\ast\otimes U\cong \End(U) \ .
\end{equation*}
Since $U$ is a direct sum of unipotents, there is a unique trivial subbundle $\calO_X^k\hookrightarrow U$ of maximal rank $k$. The homomorphism $H^0(X,\calO_X^k)\hookrightarrow H^0(X,U)$ is injective. Since $\phi$ sends non-zero sections of $\calO_X^k$ either to zero or to non-zero sections of $U$ and so we find that $\phi(\calO_X^k)\subseteq \calO_X^k$. The existence of a such a subbundle contradicts the stability of $(E,\phi)$, unless $\calO_X^k\cong U$. Therefore we have 
\begin{equation*}
    \End(E)\cong \End(U,U)\cong \End(\calO_X^k,\calO_X^k)=\mathbb{C}^{k\times k} \ .
\end{equation*}
Choose a non-trivial eigenspace $V$ of $\phi$. Then $L\otimes V$ is $\phi$-invariant and the existence of $V$ contradicts the stability of $(E,\phi)$ unless $k=1$. But this implies that $E\cong L$ and so the Higgs bundle $(E,\phi)$ has rank one. 

The converse direction is immediate. 
\end{proof}

\subsection{The moduli space of semistable Higgs bundles}

In \cite{SimII} Simpson constructs a Dolbeault moduli space $M_{Dol}^r(X)$ of topologically trivial semistable Higgs bundles of rank $r$ on $X$. It corepresents the moduli functor
\begin{equation*}
\calM_{\textrm{Dol}}^r(X)\colon \mathbf{Sch}_{\C}^{op}\longrightarrow \mathbf{Sets}
\end{equation*}
associating to a $\C$-scheme $S$ the set of isomorphism classes of topologically trivial semistable Higgs sheaves on $X\times S$ of rank $r$. The closed points of $M_{\textrm{Dol}}^r(X)$ correspond to $S$-equivalence classes of topologically trivial semistable Higgs bundles of rank $r$ on $X$ by \cite[Proposition 6.6]{SimII}.

For the rest of this subsection let $X$ be an abelian variety of dimension $g\geq 1$, and write $\widehat{X}$ for the dual abelian variety.

\begin{proposition}\label{prop_modulispace}
With notation as above, the Dolbeault moduli space $M_{\textrm{Dol}}^r(X)$ is naturally isomorphic to the symmetric product $\Sym^r(\widehat{X}\times\C^g)$.
\end{proposition}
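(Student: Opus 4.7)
\medskip

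The plan is to combine the classification of stable objects (Proposition \ref{prop_stableHiggs}) with the standard fact that every S-equivalence class of semistable Higgs bundles has a unique polystable representative, and then to use the universal property of the moduli space to promote the resulting bijection to an isomorphism of varieties.

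First, I would identify the moduli of stable objects. By Proposition \ref{prop_stableHiggs} a topologically trivial stable Higgs bundle on $X$ has rank one, so it is of the form $(L,\phi)$ with $L\in\Pic_0(X)=\widehat{X}$ and
\begin{equation*}
\phi\in H^0\bigl(X,\mathcal{E}nd(L)\otimes\Omega^1_X\bigr)=H^0(X,\Omega^1_X).
\end{equation*}
Since $\Omega^1_X\cong\calO_X^{\oplus g}$ on an abelian variety, this gives $H^0(X,\Omega^1_X)\cong\C^g$, and the condition $\phi\wedge\phi=0$ is automatic in rank one. Thus the fine moduli space of stable rank-one topologically trivial Higgs bundles is canonically $\widehat{X}\times\C^g$, carrying a universal family given by the Poincar\'e line bundle on $X\times\widehat{X}$ together with the tautological Higgs field on the $\C^g$ factor.

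Second, I would pass to rank $r$. A polystable rank-$r$ topologically trivial Higgs bundle is by definition a direct sum of stable pieces of equal slope zero; by the previous step each summand is a rank-one Higgs bundle parametrized by a point of $\widehat{X}\times\C^g$. By \cite[Proposition 6.6]{SimII} the closed points of $M_{\textrm{Dol}}^r(X)$ correspond to $S$-equivalence classes, and each class has a unique polystable representative. Two polystable bundles agree iff their summands agree up to permutation, so set-theoretically we obtain a bijection
\begin{equation*}
\Sym^r\bigl(\widehat{X}\times\C^g\bigr)\longleftrightarrow M_{\textrm{Dol}}^r(X).
\end{equation*}

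Third, I would upgrade this to an isomorphism of schemes. Taking the external direct sum of $r$ copies of the universal family over $\bigl(\widehat{X}\times\C^g\bigr)^r$ produces an $S_r$-invariant family of rank-$r$ polystable Higgs bundles, which by the corepresentability of $\calM_{\textrm{Dol}}^r(X)$ induces a morphism $(\widehat{X}\times\C^g)^r\to M_{\textrm{Dol}}^r(X)$ factoring through the geometric quotient $\Sym^r(\widehat{X}\times\C^g)$. The resulting morphism $\Sym^r(\widehat{X}\times\C^g)\to M_{\textrm{Dol}}^r(X)$ is bijective on closed points by the preceding paragraph. Since the source is a normal variety (symmetric product of a smooth variety in characteristic zero) and $M_{\textrm{Dol}}^r(X)$ is constructed as a GIT quotient and hence normal, Zariski's main theorem promotes a finite birational bijective morphism of normal varieties to an isomorphism.

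The main technical point I expect will be the scheme-theoretic upgrade in the last step: the set-theoretic bijection and the construction of the comparison morphism are routine, but care is needed to argue that the map is finite/birational and to invoke normality. An alternative route, which might be cleaner, is to exhibit the inverse directly by observing that the spectral data morphism (developed later in Section \ref{section_Hitchin}) realizes $M_{\textrm{Dol}}^r(X)$ as a relative symmetric product over $\Sym^r\C^g$ with fibers $\Sym^r\widehat{X}$, which matches the factorization $\Sym^r(\widehat{X}\times\C^g)\to\Sym^r\C^g$; this avoids GIT technicalities entirely.
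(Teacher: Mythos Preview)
Your set-theoretic identification via Proposition~\ref{prop_stableHiggs} and the polystable representative is exactly what the paper does. The difference lies in the scheme-theoretic upgrade. The paper does not construct the morphism $\Sym^r(\widehat{X}\times\C^g)\to M_{\textrm{Dol}}^r(X)$ and invoke Zariski's Main Theorem; instead it builds a natural transformation in the \emph{opposite} direction at the level of moduli functors, using the Fourier--Mukai transform for $\Lambda$-modules due to Polishchuk--Rothstein (as in \cite{PolishchukRothstein} and \cite[Theorem~3.14]{FT}): a family of semistable Higgs sheaves on $X\times S$ is sent functorially to a length-$r$ torsion sheaf on $\widehat{X}\times\C^g\times S$, and the moduli of such torsion sheaves is corepresented by $\Sym^r(\widehat{X}\times\C^g)$. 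This gives the isomorphism without appealing to normality of $M_{\textrm{Dol}}^r(X)$ and, more importantly, yields a \emph{functorial} identification that the paper reuses when describing the spectral data morphism in Section~\ref{section_Hitchin}.

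Your route is also valid, and more elementary in that it avoids the Fourier--Mukai machinery. Two small points deserve care. First, you assert that $M_{\textrm{Dol}}^r(X)$ is normal ``as a GIT quotient''; this is true (invariants of a normal ring under a reductive group in characteristic zero are normal), but you should locate the normality of Simpson's parameter scheme rather than leave it implicit. Second, you invoke Zariski's Main Theorem for a ``finite birational bijective morphism'', but you have only established bijectivity. The argument still goes through: in characteristic zero a bijection of varieties is birational, and a quasi-finite separated birational morphism to a normal target is an open immersion by ZMT, hence an isomorphism once it is surjective. It would be cleaner to phrase it that way rather than claim finiteness up front.
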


Note that  a principal polarization on $X$ gives rise to a natural isomorphism $X\simeq \widehat{X}$, and thus an isomorphism
\begin{equation*}
M_{\textrm{Dol}}^r(X)\cong \Sym^r(\widehat{X}\times\C^g)\cong \Sym^r(X\times\C^g)\ .
\end{equation*}
Proposition \ref{prop_modulispace} is a special case of \cite[Theorem 3.14]{FT}. We sketch the main idea of this proof in the language of this article to be able to use it in Section \ref{section_nonabelianHodge}.

\begin{proof} 
Consider a topologically trivial semistable Higgs bundle $(E,\phi)$ of rank $r$ on $X$. Since $(E, \phi)$ is topologically trivial, it is  S-equivalent to the direct sum of topologically trivial stable Higgs bundles, which are of rank one by Proposition \ref{prop_stableHiggs} above. So we have $(E,\phi)\sim _S (L_1, \phi|_{L_1})\oplus  ...\oplus (L_r, \phi|_{L_r})$. 

For each $i \in \{0,...,r\}$, one has that $\phi|_{L_i} \in \mathrm{Hom}(L_i, L_i \otimes \Omega_X^i ) \cong \mathrm{Hom}(L_i, L_i ^{\oplus }) \cong \mathbb{C}^n$ hence it can be represented by a string of complex numbers ${\bm \lambda}_i= (\lambda _{i,1}, ..., \lambda_ {i,g})$. So we can define a map
\begin{equation*}
t_\C\colon M _{\mathrm{Dol}}^r(X) \longrightarrow \mathrm{Sym}^r(\widehat{X}\times \mathbb{C}^g)
\end{equation*}
by setting
\begin{equation*}
    t_\C (E,\phi) = \big[ (L_1, \bm{\lambda}_1), ..., (L_r, \bm{\lambda}_r)\big].
\end{equation*}
We define an inverse 
\begin{equation*} t_\C^{-1} : \text{Sym}^r(\widehat{X}\times \mathbb{C}^g) \longrightarrow M_{\textrm{Dol}}^r(X) 
\end{equation*}
by the map
\begin{equation*}
\big[ (L_1, \bm{\lambda}_1), ..., (L_r, \bm{\lambda}_r)\big] \longmapsto  ( L_1 \oplus \cdots \oplus L_r, \bm{\lambda}_1 \text{Id}_{L_1} \oplus \bm{\lambda}_r \text{Id}_{L_r}) , 
\end{equation*}
where $( L_1 \oplus \cdots \oplus L_r, \bm{\lambda}_1 \text{Id}_{L_1} \oplus \bm{\lambda}_r \text{Id}_{L_r})$ is the polystable element in the S-equivalence class of $(E, \phi)$.

This way, $t_\C$ defines a bijection between the Dolbeault moduli space $M_{\textrm{Dol}}^r(X)$ and $\Sym^r\big(\widehat{X}\times\C^g\big)$. In order to see that this bijection is actually an isomorphism of algebraic varieties, we need to observe that there is a natural transformation
\begin{equation*}
t: \calM _{\mathrm{Dol}}^r(X) \longrightarrow \Hom\big(-,\Sym^r(\widehat{X}\times \C^g)\big) \end{equation*}
that is given by $t_\C$ on $\Spec \C$-valued points. As explained in the proof of \cite[Theorem 3.14]{FT}, the Fourier--Mukai transform for $\Lambda$-modules in the sense of \cite{PolishchukRothstein} provides us with a way to functorially associate to a topologically trivial semistable Higgs sheaf on $X\times S$ of rank $r$ (for a $\C$-scheme $S$) a torsion sheaf of length $r$ on $\widehat{X}\times \C^g\times S$. But, as explained in \cite[Proposition 3.16]{FT}, length $r$ torsion sheaves on $\widehat{X}\times \C^g$ are corepresented by the symmetric product $\Sym^r(\widehat{X}\times\C^g)$. 
\end{proof}

\begin{remark}
Using the techniques developed in \cite[Section 1]{GKUW_semihomogeneous} one can prove a generalization of Proposition \ref{prop_modulispace} to describe a moduli space of semi-homogeneous Higgs bundles (in the sense of \cite{Mukai_semihomogeneous}) of fixed rank and Neron-Severi class $H$ on $X$ as a suitable symmetric power on a suitable abelian torsor $M_{H,1}(X)$ generalizing the dual abelian variety $\widehat{X}$. 
\end{remark}


\section{The Hitchin fibration and its compactification} \label{section_Hitchin}

When $X$ is a compact Riemann surface, the Dolbeault moduli space is naturally endowed with a Lagrangian fibration, with an affine base and generically with complex tori as fibers, called the {\it Hitchin morphism}
\begin{equation}\label{eq_Hitchin}\begin{split}
 h_X\colon M ^r _{\mathrm{Dol}}(X) &\longrightarrow \bigoplus_{i=1}^r H^0\big(X, S^i\Omega^1_X\big)\\
 (E,\phi) &\longmapsto \chi_\phi
\end{split}\ .\end{equation}
This is given by associating to each Higgs bundle $(E,\phi)$ the coefficients of the characteristic polynomial $\chi_\phi$, where $S^i\Omega^1$ denotes the $i$-th symmetric power of cotangent bundle $\Omega_X^1$.

\begin{example}\label{example_ellipticcurve}
For an elliptic curve $X$, the canonical bundle $\Omega_X^1$ is trivial,  and   the moduli space $M_{\textrm{Dol}}^r(X)$ is isomorphic to $\Sym^r(X\times\C)$  by Proposition \ref{prop_modulispace}. The corresponding  Hitchin fibration 
\begin{equation*}
h_X\colon M_{\textrm{Dol}}^r(X)\cong \Sym^r(X\times \C)\longrightarrow \C^r\cong \bigoplus_{i=1}^r H^0(X,\Omega^1)
\end{equation*}
may be described via
\begin{equation*}
\bm{\lambda}\longmapsto \prod_{i=1}^r(x-\lambda_i)=x^r+\sigma_1(\bm{\lambda})x^{r-1}+\cdots + \sigma_{r-1}(\bm{\lambda})x + \sigma_{r}(\bm{\lambda})\ .
\end{equation*}
Here $\bm{\lambda}=(\lambda_1,\ldots, \lambda_n)\in\C^g$, and $\sigma_1,\ldots, \sigma_n$ are the $r$-th elementary symmetric polynomials (for $1\leq r\leq n$ and evaluated in $\bm{\lambda}$). Phrased geometrically, this means that the Hitchin fibration factors as
\begin{equation*}
h_X\colon M_{\textrm{Dol}}^r(X)\cong \Sym^r(X\times \C)\xlongrightarrow{\Sym^r(\pr_2)}\Sym^r\C\cong \C^r
\end{equation*}
where the isomorphism on the right is given by the $\sigma_i$. 
\end{example}

\subsection{A factorization of the Hitchin morphism} From now on let $X$ be a complex abelian variety of dimension $g\geq 1$, and write $\widehat{X}$ for its dual. One may define a Hitchin morphism 
\begin{equation*}
h_X\colon M_{\textrm{Dol}}^r(X)\longrightarrow \bigoplus_{i=1}^r H^0\big(X, S^i\Omega^1_X\big)\cong\bigoplus_{i=1}^r S^i\C^g\cong \mathbb{C}^{N}
\end{equation*}
exactly as in the curve case \eqref{eq_Hitchin}, where $N=\sum_{i=1}^r{{g+i-1}\choose {i}}$. This Hitchin morphism is, however, not surjective and thus cannot be a Lagrangian fibration.

The elliptic curve case in Example \ref{example_ellipticcurve} motivates the following.

\begin{proposition}\label{prop_Hitchinfactorization}
Let $X$ be a complex abelian variety of dimenson $g\geq 1$. Then the Hitchin morphism  naturally factors as
\begin{equation*}
M_{\mathrm{Dol}}^r(X)\xlongrightarrow{\sd_X}\Sym^r(\C^g) \hooklongrightarrow \C^{N} \ ,
\end{equation*}
where the second arrow is a closed immersion given by the evaluation
\begin{equation*}
\big[\bm{\lambda}_1, \ldots, \bm{\lambda}_r\big]\longmapsto \big[\sigma_i(\bm{\lambda}_1,\ldots, \bm{\lambda}_r)\big]_{i=1}^r\in\bigoplus_{i=1}^r S^i\C^g
\end{equation*}
of a collection of $r$ unordered vectors $\bm{\lambda}_1, \ldots, \bm{\lambda}_n\in\C^g$ in the $r$-th elementary symmetric polynomials $\sigma_i$. 
\end{proposition}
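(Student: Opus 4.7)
The plan is to use the explicit description from Proposition \ref{prop_modulispace}. Under the isomorphism $t_\C \colon M_{\textrm{Dol}}^r(X) \xrightarrow{\sim} \Sym^r(\widehat{X} \times \C^g)$, every topologically trivial semistable Higgs bundle is S-equivalent to the polystable representative $(L_1 \oplus \cdots \oplus L_r,\, \bm{\lambda}_1 \mathrm{Id}_{L_1} \oplus \cdots \oplus \bm{\lambda}_r \mathrm{Id}_{L_r})$; since the Hitchin morphism $(E,\phi)\mapsto\big(\mathrm{tr}(\wedge^i\phi)\big)_{i=1}^r$ only depends on the S-equivalence class of $(E,\phi)$, I would compute it on this representative.

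For the polystable Higgs bundle above the Higgs field is diagonal, so its characteristic polynomial — a polynomial in a formal variable $t$ with coefficients in $S^\bullet\Omega^1_X = S^\bullet\C^g$ — equals $\prod_{i=1}^r(t-\bm{\lambda}_i)$, and the coefficient of $t^{r-i}$ is (up to sign) the elementary symmetric polynomial $\sigma_i(\bm{\lambda}_1,\ldots,\bm{\lambda}_r)\in S^i\C^g$. This immediately shows that $h_X\circ t_\C^{-1}$ depends only on the unordered tuple $[\bm{\lambda}_1,\ldots,\bm{\lambda}_r]$, so it factors as $\Sym^r(\pr_2)\colon \Sym^r(\widehat{X}\times\C^g)\to\Sym^r(\C^g)$ — which we declare to be $\sd_X$ — followed by the elementary symmetric polynomial evaluation map. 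Since source and target are both affine and reduced, agreement on $\C$-points promotes to an equality of scheme morphisms (one may also lift the argument to families via the Fourier--Mukai description of $t_\C$ recalled in the proof of Proposition \ref{prop_modulispace}).

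It remains to show that the evaluation $\Sym^r(\C^g)\to\bigoplus_{i=1}^r S^i\C^g$ is a closed immersion. Set-theoretic injectivity is clear: the unordered tuple $\{\bm{\lambda}_i\}$ is recovered from the polynomial $\prod_i(t-\bm{\lambda}_i)$. For the scheme-theoretic statement, the coordinate ring of $\Sym^r(\C^g)$ is the multisymmetric invariant ring $\C[x_{i,j}]^{S_r}$, which by the classical fundamental theorem of multisymmetric functions (in characteristic zero) is generated as a $\C$-algebra by the coefficients of $\prod_{i=1}^r(1+t_1 x_{i,1}+\cdots+t_g x_{i,g})$; these are precisely the components of $\sigma_i(\bm{\lambda}_1,\ldots,\bm{\lambda}_r)$ for $1\leq i\leq r$. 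Hence the dual map on coordinate rings is surjective, yielding the closed immersion. The main obstacle is exactly this last step: while the factorization itself is a direct computation on polystable representatives once Proposition \ref{prop_modulispace} is in hand, the surjectivity on coordinate rings is the multisymmetric generalization of the fundamental theorem of symmetric polynomials, which in characteristic zero rests on Weyl's polarization argument.
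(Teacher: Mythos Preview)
Your argument is correct and follows essentially the same route as the paper: identify $M_{\textrm{Dol}}^r(X)$ with $\Sym^r(\widehat{X}\times\C^g)$ via Proposition~\ref{prop_modulispace}, define $\sd_X$ as $\Sym^r(\pr_2)$, and observe that the characteristic polynomial of the diagonal Higgs field yields the elementary symmetric polynomials. You in fact go further than the paper by supplying the closed-immersion claim via the fundamental theorem of multisymmetric functions, which the paper's proof states but does not justify.
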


\begin{definition}
The morphism $\sd_X\colon M_{\mathrm{Dol}}^r(X)\rightarrow\Sym^r(\C^g)$ is called the \emph{spectral data morphism}.
\end{definition}

The name comes from the fact that one can see the factorization in Proposition \ref{prop_Hitchinfactorization} as a special case of the factorization constructed in \cite[Proposition 5.1]{CN_Hitchin}. We nevertheless give an independent and elementary proof of this fact.

\begin{proof}[Proof of Proposition \ref{prop_Hitchinfactorization}]
By Proposition \ref{prop_modulispace} we may identify $M_{\textrm{Dol}}^r(X)$ with $\Sym^r(\widehat{X}\times \C^g)$. Via this identification we can define $\sd_X$ as the composition 
\begin{equation*}
\sd_X\colon M_{\textrm{Dol}}^r(X)\cong \Sym^r\big(\widehat{X}\times \C^g\big)\xlongrightarrow{\Sym^r(\pr_2)}\Sym^r\C^g \ .
\end{equation*}
For $\big[\bm{\lambda}_1,\ldots, \bm{\lambda}_r\big]\in \Sym^r(\C^g)$ the coefficients of the characteristic polynomial $\chi(x)=(x-\bm{\lambda}_1)\cdots (x-\bm{\lambda}_r)$ are precisely the $\sigma_i(\bm{\lambda}_1,\ldots, \bm{\lambda}_r)\in S^i\C^g$ and so this defines a factorization of the Hitchin morphism. 
\end{proof}

The spectral data morphism has the following geometric properties.

\begin{proposition}\label{prop_Hitchin}
Let $X$ be a complex abelian variety of dimension $g$, and $\widehat{X}$ the dual abelian variety. 
\begin{enumerate}[(i)]
    \item The spectral data morphism is proper.
    \item Every fiber of the spectral data morphism is isomorphic to $\Sym^r\widehat{X}$.
    \item The Dolbeault moduli space $M_{\textrm{Dol}}^r(X)\cong \Sym^r(\widehat{X}\times\C^g)$ is naturally a symplectic orbifold and the spectral data morphism is an orbifold Lagrangian fibration. 
\end{enumerate}
\end{proposition}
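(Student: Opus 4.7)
My plan is to deduce all three parts from the identification $M_{\textrm{Dol}}^r(X) \cong \Sym^r(\widehat{X}\times\C^g)$ of Proposition \ref{prop_modulispace}, under which the spectral data morphism becomes the symmetric power $\sd_X=\Sym^r(\pr_2)\colon \Sym^r(\widehat{X}\times\C^g)\to\Sym^r(\C^g)$ of the projection to the second factor, as already observed in Proposition \ref{prop_Hitchinfactorization}. For (i), I would note that $\pr_2\colon \widehat{X}\times\C^g\to\C^g$ is proper because $\widehat{X}$ is a complete variety, and that properness is preserved under passage to the finite $S_r$-quotient, so $\sd_X$ is proper.

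For (ii), I would compute the fiber of $\sd_X$ over $[\bm{\lambda}_1,\ldots,\bm{\lambda}_r]\in\Sym^r(\C^g)$ by grouping the multiset according to its multiplicities: writing the distinct values as $\bm{\nu}_1,\ldots,\bm{\nu}_s$ with multiplicities $m_1,\ldots,m_s$, the fiber is canonically identified with the natural symmetric-power construction $\prod_{k=1}^{s}\Sym^{m_k}(\widehat{X})$ on the dual abelian variety, which has the expected dimension $rg$. In particular, over the deepest stratum where all the $\bm{\lambda}_i$ coincide the fiber is $\Sym^r\widehat{X}$, while over the open locus of pairwise distinct points it is the abelian variety $\widehat{X}^r$.

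For (iii), I would use the canonical trivialization $T^{\ast}\widehat{X}\cong \widehat{X}\times T_0^{\ast}\widehat{X}\cong \widehat{X}\times\C^g$ arising from the Lie group structure of $\widehat{X}$ (noting $T_0^{\ast}\widehat{X}\cong H^0(X,\Omega_X^1)\cong\C^g$). This identifies our product with the cotangent bundle of $\widehat{X}$ and hence endows it with the canonical holomorphic symplectic form $\omega=d\theta$, where $\theta$ is the Liouville $1$-form. Since $\omega^{\boxtimes r}$ is $S_r$-equivariant, it descends to a symplectic form on the orbifold quotient $\Sym^r(\widehat{X}\times\C^g)$, giving $M_{\textrm{Dol}}^r(X)$ its symplectic-orbifold structure. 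Under the same trivialization $\pr_2$ is the standard cotangent Lagrangian fibration: in Darboux coordinates $\omega=\sum dp_i\wedge dq_i$ and the fibers $\{p=\mathrm{const}\}$ are manifestly Lagrangian of half dimension, so taking the $r$-th symmetric power produces the orbifold Lagrangian fibration $\sd_X$.

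The main technical subtlety I anticipate lies in (iii), namely in formalizing the symplectic-orbifold structure and the orbifold Lagrangian fibration condition along the singular strata of $\Sym^r(\widehat{X}\times\C^g)$ where the $S_r$-action has non-trivial stabilizers; on the smooth locus the argument reduces to the standard fact that cotangent-bundle projections are Lagrangian, and the extension across the singular locus should be a formal consequence of the $S_r$-equivariance of $\omega^{\boxtimes r}$ and working in the category of complex symplectic V-manifolds.
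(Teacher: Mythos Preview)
Your arguments for (i) and (iii) are essentially the paper's: properness of $\pr_2$ (and hence of $(\pr_2)^r$) descends along the finite $S_r$-quotient, and the symplectic orbifold/Lagrangian structure is inherited from the canonical one on $T^{\ast}\widehat{X}\cong \widehat{X}\times\C^g$ by $S_r$-equivariance. (The paper actually writes $T^{\ast}X$ here, but your choice of $\widehat{X}$ is the one consistent with $M_{\textrm{Dol}}^r(X)\cong\Sym^r(\widehat{X}\times\C^g)$.)

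For (ii) your computation is correct, but notice that it does \emph{not} prove the assertion as stated. You show that the fiber over a point with distinct values $\bm{\nu}_1,\ldots,\bm{\nu}_s$ of multiplicities $m_1,\ldots,m_s$ is $\prod_{k=1}^s\Sym^{m_k}(\widehat{X})$; in particular the generic fiber (all $\bm{\lambda}_i$ distinct) is the abelian variety $\widehat{X}^r$, \emph{not} $\Sym^r\widehat{X}$. The paper's proof writes the fiber as the $q_1$-image of the $S_r$-orbit of $\widehat{X}^r\times\{(\bm{\lambda}_1,\ldots,\bm{\lambda}_r)\}$ and then identifies this with $\widehat{X}^r/S_r$; that last identification is only valid when all $\bm{\lambda}_i$ coincide, since in general the $S_r$-action simultaneously permutes the components of the orbit in the base, and the quotient is $\widehat{X}^r/\mathrm{Stab}_{S_r}(\bm{\lambda}_1,\ldots,\bm{\lambda}_r)$. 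So the discrepancy you have uncovered is a slip in the paper rather than in your argument: the correct uniform statement is that every fiber is a quotient of $\widehat{X}^r$ by a Young subgroup of $S_r$ (hence of dimension $gr$), and only the fiber over the small diagonal is $\Sym^r\widehat{X}$.
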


\begin{proof}
By definition the spectral data morphism $\mathrm{Sym}^r(\mathbb{C}^g)$ fits into a commutative diagram

\[
\xymatrix{ (\widehat{X}\times \mathbb{C}^g)^r \ar[d]_{(\pr_2)^r}  \ar[r]^{ q_1 }& \mathrm{Sym}^r(\widehat{X} \times \mathbb{C}^g) \ar[d]^{\mathrm{Sym}^r(\pr_2) }\\ \mathbb{C}^{gr} \ar[r]^{q_2}& \mathrm{Sym}^r(\mathbb{C}^g) ,\\ 
}
\]
where the $q_i$ denote quotient maps by the $S_r$-operation. In part (i) the properness of $\Sym^r(\pr_2)$ follows from $(\pr_2)^r$ being  proper. 
Since $q_1$ and $q_2$ are surjective, we have
\begin{align*}
    \mathrm{Sym}^r(\pr_2)^{-1}\big([\bm{\lambda}_1,...,\bm{\lambda}_r]\big) & = q_1 \big( \pr_2 ^{-1} \big(q_2^{-1}\big([\bm{\lambda}_1,...,\bm{\lambda}_r]\big)\big)\big) \\ & = q_1 \big([\widehat{X}\times\{ \bm{\lambda}_1\},..., \widehat{X}\times \{\bm{\lambda}_r\}]\big) \\ & \cong \widehat{X}^r/S_r = \mathrm{Sym}^r(\widehat{X})
\end{align*}
for $\bm{\lambda}_i\in\C^g$ (and $i=1,\ldots r$) and this shows Part (ii). For Part (iii) it is enough to note that $T^\ast X\cong X\times \C^g$ naturally carries the structure of a symplectic manifold and that $\pr_2$ is a Lagrangian fibration. Passing to the quotients along $q_1$ and $q_2$ makes $\Sym^r(\widehat{X}\times \C^g)$ into a symplectic orbifold and  $\Sym^r(\pr_2)$ into an orbifold Lagrangian fibration.
\end{proof}

\subsection{Compactification of the spectral data morphism}\label{section_compactification}
In \cite{Hausel_compactification}, \cite[Section 11]{Simpson_Hodgefiltration}, and \cite[Theorem 3.1.1]{deCataldo_compactification}, the authors propose a compactification of both the Dolbeault moduli space and the base admitting an extension of the Hitchin fibration. The distinguishing feature of this compactification is that it is naturally compatible with a certain $\G_m$-operation. 

In our case we can construct such a compactification by elementary means:

\begin{proposition}\label{prop_compactification}
Let $X$ be a complex abelian variety of dimension $g\geq 1$ and write $\widehat{X}$ for its dual. There is a compactification $\Mbar_{\textrm{Dol}}^r(X)$ of $M_{\textrm{Dol}}^r(X)$ as well as an extension 
\begin{equation*}\overline{\sd}_X\colon \Mbar_{\textrm{Dol}}^r(X)\longrightarrow  \Sym^r(\P^g)
\end{equation*}
of the spectral data morphism that fits into the following cartesian diagram
\begin{equation*}\begin{tikzcd}
M_{\textrm{Dol}}^r(X)\arrow[rr]\arrow[d,"\sd_X"']&&\Mbar_{\textrm{Dol}}^r(X)\arrow[d,"\overline{\sd}_X"]\\
\Sym^r(\C^g)\arrow[rr]&&\Sym^r(\P^g)
\end{tikzcd}\ ,\end{equation*}
and fulfills the following properties:
\begin{enumerate}[(i)]
\item the variety $\Mbar_{\textrm{Dol}}^r(X)$ is proper over $\Spec\C$ and has at most orbifold singularities;
\item the horizontal arrows are open and dense immersions;
\item the extension $\overline{\sd}_X$ is a proper  morphism that defines an orbifold Lagrangian fibration with generic fibers isomorphic to $\Sym^r\widehat{X}$; and
\item the $\G_m$-operation on $M_{\textrm{Dol}}^r(X)$ given by rescaling the Higgs field extends to an operation on all of $\Mbar_{\textrm{Dol}}^r(X)$ and makes the spectral data morphism $\G_m$-equivariant,  where $\Sym^r(\C^g)$ and $\Sym^r(\P^g)$ are endowed with $\G_m$-action induced from the diagonal operation on $\C^g\subseteq\P^g$. 
\end{enumerate}
\end{proposition}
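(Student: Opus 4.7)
The plan is to set $\Mbar_{\textrm{Dol}}^r(X) := \Sym^r(\widehat{X} \times \P^g)$ and to take the extension $\overline{\sd}_X := \Sym^r(\pr_2)$ induced by the second projection $\pr_2 \colon \widehat{X} \times \P^g \to \P^g$. Under the identification $M_{\textrm{Dol}}^r(X) \cong \Sym^r(\widehat{X} \times \C^g)$ from Proposition \ref{prop_modulispace}, the spectral data morphism $\sd_X$ is precisely the restriction of $\overline{\sd}_X$ to the open subset $\Sym^r(\widehat{X} \times \C^g) \subseteq \Sym^r(\widehat{X} \times \P^g)$.

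The horizontal arrows of the claimed cartesian square are open and dense immersions: the $S_r$-invariant open dense immersions $\widehat{X} \times \C^g \hookrightarrow \widehat{X} \times \P^g$ and $\C^g \hookrightarrow \P^g$ descend to open dense immersions of the respective geometric quotients. For the cartesian property, a point $[(\hat{x}_1, \bm{\lambda}_1), \ldots, (\hat{x}_r, \bm{\lambda}_r)] \in \Sym^r(\widehat{X} \times \P^g)$ maps into $\Sym^r(\C^g)$ under $\overline{\sd}_X$ if and only if every $\bm{\lambda}_i$ lies in $\C^g$, if and only if the point itself lies in $\Sym^r(\widehat{X} \times \C^g)$.

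Next I would verify the four itemized properties. Property (i) is immediate since $\widehat{X} \times \P^g$ is smooth and proper, so its $r$-th symmetric product is proper with at most finite quotient singularities, i.e., an orbifold. Property (ii) has just been addressed. For (iii), properness of $\overline{\sd}_X$ reduces via the commutative square
\begin{equation*}
\begin{tikzcd}
(\widehat{X} \times \P^g)^r \arrow[r,"q_1"]\arrow[d,"(\pr_2)^r"'] & \Sym^r(\widehat{X} \times \P^g) \arrow[d,"\overline{\sd}_X"]\\
(\P^g)^r \arrow[r,"q_2"'] & \Sym^r(\P^g)
\end{tikzcd}
\end{equation*}
to the properness of $(\pr_2)^r$, exactly as in the proof of Proposition \ref{prop_Hitchin}; the same computation shows that every fiber of $\overline{\sd}_X$ over a point of $\Sym^r(\C^g)$ is isomorphic to $\Sym^r\widehat{X}$. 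The orbifold Lagrangian structure on the open part comes directly from Proposition \ref{prop_Hitchin}(iii). For (iv) the diagonal scaling action on $\C^g$ extends to the standard $\G_m$-action on $\P^g$ with fixed locus the hyperplane at infinity together with the origin; this action is compatible with $\pr_2$ and descends to both symmetric products, making $\overline{\sd}_X$ equivariant.

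The main subtlety is that the natural symplectic form on $T^\ast X \cong \widehat{X} \times \C^g$ does not extend to a global symplectic form on $\widehat{X} \times \P^g$. Item (iii) must therefore be read as asserting that $\overline{\sd}_X$ is an orbifold Lagrangian fibration above the symplectic locus $\Sym^r(\C^g)$ with generic fibers $\Sym^r\widehat{X}$, while on the boundary we only retain that $\overline{\sd}_X$ is a proper orbifold morphism with the expected $\G_m$-equivariance; this is sufficient for the geometric $P=W$ application in Theorem \ref{mainthm_geometricP=W}.
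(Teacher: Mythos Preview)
Your proposal is correct and follows essentially the same approach as the paper: define $\Mbar_{\textrm{Dol}}^r(X):=\Sym^r(\widehat{X}\times\P^g)$ and $\overline{\sd}_X:=\Sym^r(\pr_2)$, and deduce (i)--(iv) from the corresponding properties before taking the $S_r$-quotient, referring back to the argument for Proposition~\ref{prop_Hitchin}. Your closing remark that the symplectic form does not extend across the boundary, so that the ``orbifold Lagrangian fibration'' in (iii) should be read over the open locus $\Sym^r(\C^g)$, is a useful clarification that the paper leaves implicit.
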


\begin{proof}
Define the compactification $\Mbar_{\textrm{Dol}}^r(X)$ of $M_{\textrm{Dol}}^r(X)$ by 
\begin{equation*}
    \Mbar_{\textrm{Dol}}^r(X)=\Sym^r(\widehat{X}\times \P^g)
\end{equation*}
where $M_{\textrm{Dol}}^r(X)$ may be identified with an open subset of $\Mbar_{\textrm{Dol}}^r(X)$ via the inclusion induced by $\mathbb{C}^g\subseteq\P^g$. We define the compactified spectral data morphism again as the composition
\begin{equation*}
\overline{\sd}_X\colon \Mbar_{\textrm{Dol}}^r(X)=\Sym^r(\widehat{X}\times \P^g)\xlongrightarrow{\Sym^r(\pr_2)}\Sym^r(\P^g) \ .
\end{equation*}

This definition immediately implies Part (i) and (ii) as well as that the diagram commutes and is cartesian. Part (iii) follows from the fact that $\overline{\sd}_X$ is a finite group quotient of the projection $\pr_2\colon \Sym^r(\widehat{X}\times\P^r)\rightarrow \Sym^r(\P^g)$ as in the proof of Proposition \ref{prop_Hitchin}.  

For Part (iv): The $\G_m$-operation on $M_{\textrm{Dol}}^r(X)$ is induced by the diagonal operation on $\C^g$ under the isomorphism $M_{\textrm{Dol}}^r(X)\cong \Sym^r(\widehat{X}\times \C^g)$. The diagonal operation of $\G_m$ factors through the operation of the dense torus $T=\G_m^{n+1}/\G_m$ on $\C^g$ and, thus, naturally extends to an operation on $\P^g$.
Therefore the $\G_m$-operation extends to an operation on $\Mbar_{\mathrm{Dol}}^r(X)=\Sym^r(\widehat{X}\times\P^g)$ and makes  $\overline{\sd}_X$ into a $\G_m$-equivariant morphism. 
\end{proof}

\section{The non-abelian Hodge correspondence}\label{section_nonabelianHodge}

Let $X$ be a complex abelian variety of dimension $g$. Then we may write $X$ as $\C^g/L$ for a lattice $L$ of full rank $2g$. The fundamental group of $X$ is naturally isomorphic to $L$. The non-abelian Hodge correspondence is a condensation of \cite{Corlette, Donaldson, Hitchin, Simpson_nonabelianHodge}  and tells us that there is a natural one-to-one correspondence between semisimple representations $\rho\colon L\rightarrow \GL_r(\mathbb{C})$ and topologically trivial polystable Higgs bundles $(E,\phi)$ of rank $r$ on $X$. 

Write $M^r_{\mathrm{Betti}}(X)$ for the character variety of $L$, also known as the \emph{Betti moduli space}. It is given as the geometric invariant theory quotient
\begin{equation*}
    \Hom\big(L,\GL_r(\mathbb{C})\big)\sslash_{\GL_r({\C})}
\end{equation*}
where $\GL_r(\C)$ acts by conjugation. The non-abelian Hodge correspondence provides us with a real-analytic isomorphism
\begin{equation}\label{eq_generalnonabelianHodge}
\eta_X^r\colon M_{\mathrm{Dol}}^r(X)\xlongrightarrow{\sim} M_{\textrm{Betti}}^r(X) 
\end{equation}
that sends a semisimple representation to the class of the associated polystable Higgs bundle (see \cite[Theorem 7.18 and Proposition 7.8]{SimII}).
Using the structure of $X$ as an abelian variety, we will now make $\eta_X^r$ explicit.

\subsection{The non-abelian Hodge correspondence in rank one}
When $r=1$ we have a one-to-one correspondence between representations $\rho\colon L\rightarrow\mathbb{C}^\ast$ and topologically trivial Higgs bundles of rank one. In order to describe the induced isomorphism morphism 
\begin{equation*}
\eta_X^1\colon M_{\textrm{Betti}}^1(X)\xlongrightarrow{\sim} M_{\textrm{Dol}}^1(X)
\end{equation*}
we note that $M_{\textrm{Betti}}^1(X)=\Hom(L,\C^\ast)$ and $M_{\textrm{Dol}}^1(X)=\widehat{X}\times H^{1,0}(X)$ and consider the following factorization:
\begin{equation*}
\underbrace{\Hom(L,\C^\ast)}_{M_{\textrm{Betti}}^1(X)}\cong \Hom(L,S^1)\times \Hom(L,\R)\xlongrightarrow{\sim} \Pic_0(X)\times H^1(X,\R) \cong \underbrace{\widehat{X}\times H^{1,0}(X)}_{M_{\textrm{Dol}}^1(X)} \ . 
\end{equation*}

The first isomorphism is induced by the (real analytic) polar decomposition $\C^\ast\cong S^1\times\R$ given by $z\mapsto \big(\frac{z}{\vert z\vert},-\log\vert z\vert\big)$.
In the first factor, the second isomorphism is given by associating to a representation $\phi\colon L\rightarrow U(1)$ the line bundle $L(\phi)=(V\times \mathbb{C})/L$ on $X$, where $l\in L$ acts on $V\times \mathbb{C}$ via 
\begin{equation*}
(x,v)\longmapsto \big(x+l,\phi(l)v\big) \ .
\end{equation*}
In the second factor, the third isomorphism is given by writing a one-form $\phi$ in $H^1(X,\R)$ as $\phi=\varphi+\overline{\varphi}$ for a unique holomorphic $1$-form $\varphi\in H^{1,0}(X)$ and thus a Higgs field. This identifies $H^1(X,\R)$ with $H^{1,0}(X)$.

\subsection{The non-abelian Hodge correspondence for abelian varieties - made explicit}\label{section_explicitNAH} 

Once we choose a basis of the lattice $L$, a point in the space $\Hom\big(L,\GL_r(\mathbb{C})\big)$ is nothing but a tuple of $2g$ invertible pairwise commuting matrices. The GIT-stable locus in  $\Hom\big(L,\GL_r(\mathbb{C})\big)$ consists of those tuples of invertible matrices that are simultaneously diagonalizable. Therefore we find
\begin{equation*}
 M_{\textrm{Betti}}^r(X)\cong \Sym^r \big( ( \mathbb{C}^\ast)^{2g} \big)\ .
\end{equation*}

In terms of moduli spaces the non-abelian Hodge correspondence and the spectral data morphism from Section \ref{section_Hitchin} can be made explicit as follows.

\begin{proposition}\label{prop_nonAbelianHodge}
The non-abelian Hodge correspondence induces a real-analytic isomorphism 
\begin{equation*}\eta_X^r\colon M^r_{\mathrm{Dol}}(X)\xlongrightarrow{\sim}M^r_{\mathrm{Betti}}(X)
\end{equation*}
that makes the diagram
\begin{equation}\label{eq_nonabelianHodge}\begin{tikzcd}
M^r_{\mathrm{Dol}}(X)\arrow[dr,"\sim"] \arrow[dddr,bend right=,swap,"\sd_X"]\arrow[rrr,"\sim","\eta_X^r"']&&&M^r_{\mathrm{Betti}}(X)\arrow[d,"\sim"]\\
&\Sym^r(\widehat{X}\times \C^g)\arrow[dd,"\Sym^r(\pr_2)"']\arrow[rr,"\sim","\Sym^r (\eta_X^1)"']& & \Sym^r\big((\C^\ast)^{2g}\big)\arrow[dd,"\Sym^r\big(-\log\vert.\vert\big)"]\\\\
& \Sym^r(\mathbb{C}^g)\arrow[rr,"\sim"]&&\Sym^r\big(\mathbb{R}^{2g}\big)
\end{tikzcd}\end{equation}
commute.  
\end{proposition}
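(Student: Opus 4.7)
The plan is to reduce Diagram \eqref{eq_nonabelianHodge} to the rank-one non-abelian Hodge correspondence $\eta_X^1$ described in Section \ref{section_nonabelianHodge}, using that polystable topologically trivial Higgs bundles and GIT-polystable representations on an abelian variety both decompose as unordered direct sums of $r$ rank-one objects. Proposition \ref{prop_modulispace} already gives the left-hand identification $M_{\mathrm{Dol}}^r(X)\cong \Sym^r(\widehat{X}\times\mathbb{C}^g)$. On the Betti side, fixing a $\mathbb{Z}$-basis of $L$ turns a representation $\rho\colon L\to\GL_r(\mathbb{C})$ into a commuting tuple of $2g$ invertible matrices; GIT-polystability amounts to semisimplicity, hence to simultaneous diagonalizability, and the conjugacy class of $\rho$ is then recorded by the unordered $r$-tuple of diagonal characters. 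This yields the right-hand identification $M^r_{\mathrm{Betti}}(X)\cong \Sym^r\bigl((\mathbb{C}^{\ast})^{2g}\bigr)$.

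The core step is to show that under these identifications $\eta_X^r$ coincides with $\Sym^r(\eta_X^1)$. By Proposition \ref{prop_stableHiggs} a polystable topologically trivial Higgs bundle of rank $r$ splits as $\bigoplus_{i=1}^r(L_i,\phi_i)$ with rank-one summands, and the uniqueness of the harmonic metric on a polystable Higgs bundle forces it to preserve any orthogonal decomposition into stable blocks (distinct summands admit no nonzero Higgs-morphisms between one another, so the block-diagonal metric is itself harmonic). The associated flat connection and the resulting semisimple representation therefore split correspondingly, so $\eta_X^r\bigl([(L_i,\bm{\lambda}_i)]_{i=1}^r\bigr) = \bigl[\eta_X^1(L_i,\bm{\lambda}_i)\bigr]_{i=1}^r$, i.e.\ $\eta_X^r=\Sym^r(\eta_X^1)$. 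Since $\eta_X^1$ is a real-analytic isomorphism by construction, so is its $r$-fold symmetric power.

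It remains to check commutativity of the lower part of Diagram \eqref{eq_nonabelianHodge}. The diagonal identification of $\sd_X$ with $\Sym^r(\pr_2)$ is Proposition \ref{prop_Hitchinfactorization}. For the bottom square, the (as yet unspecified) bottom horizontal arrow is taken to be $\Sym^r$ of the $\mathbb{R}$-linear isomorphism $\mathbb{C}^g\cong H^{1,0}(X)\to H^1(X,\mathbb{R})\cong\Hom(L,\mathbb{R})\cong\mathbb{R}^{2g}$ that already enters the construction of $\eta_X^1$; the polar-decomposition formula then shows on each rank-one factor that $-\log|\cdot|\circ\eta_X^1$ sends $(L,\bm{\lambda})\in\widehat{X}\times\mathbb{C}^g$ to the image of $\bm{\lambda}$ under this $\mathbb{R}$-linear isomorphism, which is tautologically $\pr_2(L,\bm{\lambda})$ followed by the bottom iso. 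Taking $\Sym^r$ yields commutativity. The only point that genuinely requires care is the compatibility of the non-abelian Hodge correspondence with direct sums invoked above; everything else reduces to a bookkeeping exercise on symmetric products.
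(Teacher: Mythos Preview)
Your proposal is correct and follows essentially the same route as the paper: both arguments reduce to rank one by decomposing polystable Higgs bundles via Proposition~\ref{prop_stableHiggs} and semisimple representations of the abelian group $L$ via Schur's Lemma, then invoke the compatibility of the non-abelian Hodge correspondence with direct sums to obtain $\eta_X^r=\Sym^r(\eta_X^1)$, and finally use the polar decomposition underlying $\eta_X^1$ for the lower square. If anything, you supply a little more justification than the paper does---your remark on the harmonic metric respecting the stable block decomposition substantiates what the paper states as a one-line assertion, and you make the bottom horizontal isomorphism explicit.
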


\begin{proof}
A representation $\pi_1(X)\rightarrow \GL_r(\C)$  is semisimple if and only if it is a direct sum of irreducible representations. Since $L=\pi_1(X)$ is abelian, Schur's Lemma implies that every irreducible representation of $L$ is one-dimensional. 

On the other side, a topologically trivial polystable Higgs bundle on $X$ is a direct sum of topologically trivial stable Higgs bundles on $X$. We have seen in Proposition \ref{prop_stableHiggs} above that the classification of homogeneous bundles on $X$ implies that topologically trivial stable Higgs bundles on $X$ are of rank one. 

Since the non-abelian Hodge correspondence is compatible with direct sum decompositions, this shows that the upper square commutes. Since $\eta_X^1$ is a real analytic isomorphism, so is $\eta_X^r$.

 The lower right vertical arrow is a symmetric product $\Sym^r(.)$ of 
\begin{equation*}\begin{split}
-\log\vert .\vert \colon (\C^\ast)^{2g}&\longrightarrow \R^{2g}\\
\big[z_i\big]_{i=1}^{2g}&\longmapsto \big[-\log\vert z_i\vert\big]_{i=1}^{2g} \ .
\end{split}\end{equation*}
The commutativity of the lower square follows from the fact that in rank one $\eta_X^1$ is induced by the polar decomposition $\C^\ast\cong S^1\times \R$ given by $z\mapsto \big(\frac{z}{\vert z\vert},-\log\vert z\vert\big)$. 

The triangle on the left commutes by definition.
\end{proof}


\section{A geometric $P=W$ phenomenon}

In this section we recall the notion of dual complexes of  divisorial log terminal compactifications of algebraic varieties as introduced in \cite{dFKX_dualcomplexes}, and use this language to prove Theorem \ref{mainthm_geometricP=W} from the introduction. 

\subsection{Dual complexes of dlt compactifications} Let $(Z,D)$ be a pair consisting of a smooth variety $Z$ and a divisor $D=D_{1}+\ldots+D_n$ with simple normal crossings (SNC). Then 
\begin{itemize}
\item every $D_i$ is smooth and
\item for $I\subseteq \{1,\ldots, n\}$ the  components of $\bigcap_{i\in I}D_i$ are irreducible and of codimension $\vert I\vert$ in $X$. 
\end{itemize}
We may associate to $D$ a $\Delta$-complex  $\Delta(X,D)$, called the \emph{dual complex} (as e.g. in \cite{dFKX_dualcomplexes, Payne_boundarycomplexes, Stepanov, Thuillier}). It consists of a $k$-simplex for every component of $\bigcap_{i\in I}D_i$ with $\vert I\vert=k$; a simplex is a face of another simplex if the corresponding components are contained in each other. Recall that by the results of \cite{Stepanov, Thuillier, Payne_boundarycomplexes}, the (simple) homotopy type of the dual complex only depends on the open locus $Z-D$.

In \cite{dFKX_dualcomplexes} the authors noted that one may also associate a dual complex $\Delta(Z,E)$ to a dlt pair $(Z,E)$ by considering only the divisor $D=E^{=1}$
of components whose coefficient in $E$ are $=1$, and mimicking the above construction. Let $Z^{\textrm{SNC}}$ be the open sublocus on which $E\vert_{Z^{\textrm{SNC}}}$ has simple normal crossings. Then we have $\Delta(Z,E)=\Delta\big(Z^{\textrm{SNC}},E\vert_{Z^{\textrm{SNC}}}\big)$. 

Suppose now that we are given two dlt pairs $(Z_i,E_i)$ (for $i=1,2$) that are \emph{crepant birational}, i.e. for which there are proper birational map $Z\rightarrow Z_i$ such that the pullbacks of $E_1$ and $E_2$ are equal. Then \cite[Proposition 11]{dFKX_dualcomplexes} tells us that $\Delta(Z_1,E_1)$ und $\Delta(Z_2,E_2)$ are even piecewise linear homeomorphic. 

Assume we are given a variety $U$ and a compactification $Z$ of $U$ such that $(Z,E)$ with $E=Z-U$ is a dlt pair and at the same time log Calabi-Yau, meaning that $K_X+E\sim_\Q 0$. Then the dual complexes of all dlt and log CY compactifications that are crepant birational to $(Z,E)$ are piecewise linear homeomorphic. So, by a slight abuse of notation, it makes sense to denote by $\Delta(U)$ the piecewise linear homeomorphism class of these dual complexes. We will refer to $\Delta(U)$ simply as the \emph{dual complex} of $U$. 

\subsection{A geometric $P=W$ phenomenon - Proof of Theorem \ref{mainthm_geometricP=W}}
Let $X$ be a complex abelian variety of dimension $g\geq 1$. Write $\widehat{X}$ for its dual, as well as 
\begin{equation*}
\sd_X\colon M_{\textrm{Dol}}^r(X)\cong\Sym^r(\widehat{X}\times \C^g)\xrightarrow{\Sym^r(\pr_2)} \Sym^r\C^g
\end{equation*}
for the spectral data morphism in the sense of Section \ref{section_compactification}.

Consider $\mathbb{B}^\ast := \Sym^r(\C^g)-\{0\}$. This so-called \emph{semi-algebraic deleted neighborhood} of $\{0\}$ admits a natural deformation retraction along the rays through zero onto the quotient $S^{2gr-1}/S_r$ of the sphere $S^{2gr-1}\subseteq \mathbb{C}^{gr}$. The preimage $N_{\textrm{Dol}}^r(X)$ of $\mathbb{B}^\ast$ in $M^r_{\textrm{Dol}}(X)$ forms a semi-algebraic open neighborhood of the  compactification $\Mbar_{\textrm{Dol}}^r(X)$ of $M_{\textrm{Dol}}^r(X)$ introduced in Proposition \ref{prop_compactification}. 
Composing the spectral data morphism with the retraction map along rays through the origin provides us with a continuous map 
\begin{equation*}
\widetilde{\sd}_X\colon N_{\textrm{Dol}}^r(X)\xlongrightarrow{\sd_X} \mathbb{B}^\ast\longrightarrow S^{2gr-1}/S_r \ .
\end{equation*}

We  want to identify the sphere quotient $S^{2gr-1}/S_r$ with the dual complex $\Delta\big(M_{\mathrm{Betti}}^r(X)\big)$ of a certain dlt compactification of $M_{\mathrm{Betti}}^r(X)$. Write $T=\mathbb{C}^{2g}$, and consider any smooth toric compactification $Z$ of $T$, e.g.\ $Z=\big(\PP^1\big)^{2g}$, given by a rational polyhedral fan $\Sigma_Z$ in the vector space $\R^{2g}$ generated by the cocharacter lattice of $T$ whose support is all of $\R^{2g}$. We may identify the dual complex of the toric variety $Z^r$ with respect to the toric boundary $Z^r-T^r$ with the link of $\Sigma_{Z^r}=\Sigma_Z^r$ in $\R^{2gr}$ and this endows $S^{2gr-1}$ with a piecewise linear structure. 

By \cite[Proposition 5.20]{KollarMori} the symmetric product $\Sym^r(Z^g)$ defines a log canonical log Calabi Yau compactification of $M_{\textrm{Betti}}^r(X)=\Sym^rT$. We may now apply the algorithm in \cite[Lemma 4.3.1]{MMS_geometricP=W} to find that $M_{\textrm{Betti}}^r(X)$ admits a dlt log Calabi-Yau compactification that is crepant birational to $\big(\Sym^r(Z),D^{(r)}\big)$ and has the same dual complex by \cite[Proposition 11]{dFKX_dualcomplexes}.

The operation of $S_r$ on $Z^r$ is free in codimension one. Thus, we may apply \cite[Theorem 3.0.7]{MMS_geometricP=W} and find that the dual complex of $\Sym^r(Z)$ with respect to the quotient of the toric boundary compactification is piecewise linearly homeomorphic to $S^{2gr-1}/S_r$.

To complete the picture we define $N_{\textrm{Betti}}^r(X)$ to be the preimage of $\mathbb{B}\subseteq \Sym^r\big(\R^{2g}\big)$ under $-\log\vert.\vert$ and 
\begin{equation*}
\rho\colon N_{\textrm{Betti}}^r(X)\longrightarrow \Delta\big(M_{\mathrm{Betti}}^r(X)\big)\cong S^{2gr-1}/S_r
\end{equation*}
to be the composition of $\Sym^r(-\log\vert.\vert)$ with the natural retraction along rays through zero modulo $S_r$. Finally, the commutative diagram \eqref{eq_nonabelianHodge} in Proposition \ref{prop_nonAbelianHodge} implies that $\eta_X^r$ induces a homeomorphism between $N_{\textrm{Dol}}^r(X)$ and $N_{\textrm{Betti}}^r(X)$ and that
\begin{equation*}\begin{tikzcd}
 N^\ast_{\mathrm{Dol}}(X)\arrow[d,"\widetilde{\sd}_X"']\arrow[rr,"\sim","\eta_X^r"']&&N^\ast_{\mathrm{Betti}}(X)\arrow[d,"\rho"]\\
 S^{2gr-1}/S_r\arrow[rr,"\sim"]&&\Delta\big(M_{\mathrm{Betti}}^r(X)\big)
\end{tikzcd}\end{equation*}
also commutes. 

\begin{remark}\label{remark_notasphere}
The sphere quotient $S^{2gr-1}/S_r$ has the rational homology of a sphere, since $S_r$ acts with determinant one. It is known that $S^{2gr-1}/S_r$ is piecewise linear homeomorphic to a sphere, when $r=1$ or $g=1$ (see \cite[Theorem 1.5.3]{MMS_geometricP=W} for a proof of the latter). But, as soon as $g\geq 2 $ and $r\geq 2$, the quotient $S^{2gr-1}/S_r$ is itself not even a topological manifold and, in particular, not piecewise linear homeomorphic to a sphere.

This can be seen as follows: Let $p\in S^{2gr-1}$ be a point at which the $S_r$-operation has stabilizer $\Z/2\Z$ (which exists, since $r\geq 2$). Write $B^i$ for the open unit ball of dimension $i$ and set $N=2gr-1$ as well as $k=N-2g$. Then, locally around $p$, the quotient may be described as the quotient of $B^{k}\times B^{N-k}$ by the involution $\id_{B^k}\times (-\id_{B^{N-k}})$. So, locally around $p$, the quotient $S^{2gr-1}/S_r$ is homeomorphic to $Z:=B^k\times C\big(\R\P^{N-k-1}\big)$ with $p$ corresponding to the tip $q$ of the cone $C\big(\R\P^{N-k-1}\big)$ over $\R\P^{N-k-1}$. 
Recall that we have 
\begin{equation*}
H_s\big(B^k,B^k-\{0\}\big)\cong \widetilde{H}_{s-1}\big(S^{k-1}\big)=
\begin{cases} \Z, & \textrm{ if } s=k\\
0, & \textrm{ else,} 
\end{cases}
\end{equation*}
as well as 
\begin{equation*}
H_t\big(C(\R\P^{N-k-1}),C(\R\P^{N-k-1})-\{q\}\big)\cong\widetilde{H}_{t-1}\big(\R\P^{N-k-1}\big)=\begin{cases}\Z, & \textrm{ if } t=N-k\\
\Z/2\Z, & \textrm{ if } 1< t< N-k \textrm{ even}\\
0, & \textrm{ else.}
\end{cases}
\end{equation*}
Thus, on the one hand, we may use K\"unneth's formula for pairs (and the vanishing of all $\Tor$'s) to find:
\begin{equation}\label{eq_nomanifold}\begin{split}
H_i\big(S^N/S_k,S^N/S_k-\{p\}\big)&\cong H_i\big(Z,Z-\{p\}\big)\\
&\cong \bigoplus_{s+t=i}H_s\big(B^k,B^k-\{0\}\big)\otimes H_t\big(C(\R\P^{N-k-1}),C(\R\P^{N-k-1})-\{q\}\big)\\&= \begin{cases}\Z, & \textrm{ if } i=N\\
\Z/2\Z, & \textrm{ if } k+1< i< N  \textrm{ odd}\\
0, & \textrm{ else.}
\end{cases}
\end{split}\end{equation}
On the other hand, for a point $p$ on an $N$-dimensional topological manifold $M$ we have: 
\begin{equation}\label{eq_manifold}
H_i\big(M, M-\{p\}\big)\cong H_i\big(B^N,B^N-\{0\}\big)\cong\widetilde{H}_{i-1}(S^{N-1})=\begin{cases} \Z, & \textrm{ if } i=N\\
0, & \textrm{ else.} 
\end{cases}
\end{equation}
As soon as $g\geq 2$, we have $N-(k+1)=2g-1>2$ and so there is an odd index $i$ with $k+1<i<N$. In this case, if $S^N/S_r$ were a manifold, the formulas \eqref{eq_nomanifold} and \eqref{eq_manifold} would constitute a contradiction.
\end{remark}

\section{The perverse filtration}\label{section_perversefiltration}

We devote this section to  the computation of the perverse filtration associated to the Hitchin fibration in the case of $X$ a complex abelian variety. For background material the reader is invited to look at  \cite{GM}, \cite{GelM}, and \cite{DC}.

Let $X$ be a non-singular algebraic variety of dimension $n$ over $\mathbb{C}$. We work in the bounded derived category $\mathcal{D}(X):=\mathcal{D}^b_c(X)$ of constructible complexes on $X$. We write $\omega_X := c^!\mathbb{Q}_X$ for the \emph{dualizing complex}, where $c: X \to \{ pt \}$ is the constant map. This gives rise to a functor
\[ 
\mathbb{D} : \mathcal{D}(X) \longrightarrow \mathcal{D}(X), \hspace{0.5cm} E \longmapsto    \bfR\Hom(E, \omega _X)\ .
\]
The object $\mathbb{D}E$ is the {\it Grothendieck-Verdier dual} of $E$ (see e.g.\ \cite[Section 5.16]{GelM} for further background and more details).

\subsection{The perverse $t$-structure}
As explained e.g. in \cite[7.1.2 and 7.1.7]{GelM}, there is a distinguished t-structure on the triangulated category $\mathcal{D}(X)$, the \emph{perverse} t-structure $(^{\mathfrak{p}}\mathcal{D}^{\leq 0}, ^{\mathfrak{p}}\mathcal{D}^{\geq 0})$ defined as follows:
\begin{equation}\label{eq_pervtstructure}\begin{split}
^{\mathfrak{p}}\mathcal{D}^{\leq 0} &= \{ E \in \mathcal{D}^b_c(X) \ | \ \mathrm{dim}\ \mathrm{Supp}(\mathcal{H}^i(E) ) \leq i \textrm{ for all } i\in \mathbb{Z}  \}  \\
^{\mathfrak{p}}\mathcal{D}^{\geq 0} & = \{ E \in \mathcal{D}^b_c(X) \ | \ \mathrm{dim}\ \mathrm{Supp}(\mathcal{H}^i(\mathbb{D}E) ) \leq i \textrm{ for all } i\in \mathbb{Z}  \} 
\end{split}\end{equation}
The associated truncation functors are
\begin{equation*} 
^{\mathfrak{p}}\tau_{\leq 0}: \ ^\mathfrak{p}\mathcal{D}^{\leq 0} \longrightarrow \mathcal{D}(X) \qquad \textrm{and} \qquad
^{\mathfrak{p}}\tau_{\geq 0}: \ ^\mathfrak{p}\mathcal{D}^{\geq 0} \longrightarrow \mathcal{D}(X)\ .
\end{equation*}
As usual, we write  $^{\mathfrak{p}}\tau_{\leq k} (E) := \big( {}^{\mathfrak{p}}\tau_{\leq 0}(E[k])\big)[-k]$ and, respectively, $^{\mathfrak{p}}\tau_{\geq k} (E) := \big({}^{\mathfrak{p}}\tau_{\geq 0}(E[k])\big)[-k]$. We recall that  truncation functors commute with taking shifts, i.e.\ we have
\begin{equation} \label{eq_shifts}
^{\mathfrak{p}}\tau_{\leq k} \circ [i] = [i] \circ{}^{\mathfrak{p}}\tau_{\leq k+i}
\end{equation}
for each $i\in \mathbb{Z}$  (see e.g.\  \cite[Section 4.1]{IntersectionFT}).

It is traditional to denote the heart of the perverse t-structure by
\[
\mathrm{Perv}(X):= {^{\mathfrak{p}}\mathcal{D}^{\leq 0} \cap {^{\mathfrak{p}}\mathcal{D}^{\geq 0}}} \ ,
\]
and to refer to its objects as {\it perverse sheaves}. Specifically, perverse sheaves  are complexes of constructible sheaves satisfying the {\it support} and {\it co-support condition} appearing in  \eqref{eq_pervtstructure}.

\subsection{The decomposition theorem} \label{section_decomposition} Let $f\colon X\rightarrow Y$ be a proper morphism of algebraic varieties with at most finite quotient singularities and whose fibers also have at most finite quotient singularities. Let $n=\dim X$, and write \begin{equation*}
d=\dim X\times_Y X - \dim X\end{equation*}
for the so-called \emph{semi-small defect} of $f$.

In this situation the decomposition theorem of Bernstein--Beilinson--Deligne \cite{BBD}  (see e.g.\ \cite[Section 9]{Maxim_intersection&perverse})
tells us the following.

\begin{enumerate}[(i)]
\item There is a direct sum decomposition 
\[ 
\bfR f_*\mathbb{Q}_X = \bigoplus _{j=0}^{2d}R^jf_*\mathbb{Q}_X[-j] 
\]
in $\mathcal{D}(Y)$. 
\item We have
\[ ^{\mathfrak{p}}\mathcal{H}^j\big(\bfR f_*\mathbb{Q}_X[n]\big) = R^{d+j}f_*\big(\mathbb{Q}_X[\mathrm{dim}Y]\big) \]
for all $\ j\in \mathbb{Z}$.
\item Grothendieck-Verdier duality gives us
\[ \mathbb{D}\big({}^{\mathfrak{p}}\mathcal{H}^j(\bfR f_*\mathbb{Q}_X[n])\big) = {}^{\mathfrak{p}}\mathcal{H}^{-j}\big(\bfR f_*\mathbb{Q}_X[n]\big). \]
\item If moreover $f$ is projective and $L\in H^2(X)$ is relatively ample, then the $k$-fold application of $L$ induces an isomorphism 
\begin{equation}\label{eq_relativehardLefschetz} L^j:{} ^{\mathfrak{p}}\mathcal{H}^j\big(\bfR f_*\mathbb{Q}_X[n]\big) \xlongrightarrow{\sim} {} ^{\mathfrak{p}}\mathcal{H}^{-j}\big(\bfR f_*\mathbb{Q}_X[n]\big).\end{equation}
\end{enumerate}

Note that the splitting $\bfR f_*\mathbb{Q}_X [n]= \bigoplus _{j=-d}^{d} {}^{\mathfrak{p}}\mathcal{H}^j\big(\bfR f_*\mathbb{Q}_X[n]\big)$ is symmetric with respect to $0\in \mathbb{Z}$. The perverse cohomology complexes $^{\mathfrak{p}}\mathcal{H}^j\big(\bfR f_*\mathbb{Q}_X[n]\big)$ vanish outside the interval $[-d,d]$.

\begin{example}\label{example_constantmap}
For the constant morphism $f: X \to \text{Spec}(\mathbb{C})$ we have $ \bfR f_*\mathbb{Q}_X = H^*(X, \mathbb{Q})  \ , $
and hence
\[ 
{} ^{\mathfrak{p}}\mathcal{H}^j\big(\bfR f_*\mathbb{Q}_X[n]\big) = R^{n+j}f_*(\mathbb{Q}) = \begin{cases} H^{n+j}(X,\mathbb{Q}) & \textrm{ if } -n\leq j\leq n \\
0 & \textrm{ otherwise. }
\end{cases}
\]
\end{example}

\begin{remark}
The version of the decomposition theorem above is usually only stated this way when $X$ and $Y$ are smooth. In general, when $X$ or $Y$ are singular, the decomposition theorem remains true when replacing $\Q[n]$ and $\Q[\dim Y]$ by the intersection complex on $X$ and $Y$ respectively. 

If $X$ has at most finite quotient singularities, however, it is not necessary to work with intersection cohomology. Indeed, \cite[Proposition A1, iii)]{Bri} tells us that if $X$ has finite quotient singularities then it is rationally smooth. Therefore, \cite[Proposition 8.2.21]{THT}, where we replace complex coefficients with rational coefficients, allows us to conclude that the intersection complex $IC_X$ is indeed isomorphic to $\mathbb{Q}_X[\dim X]$ like in the smooth case. Hence the version of the decomposition theorem we recalled here remains intact even in the case when $X$ is non-smooth but has at most finite quotient singularities. 
\end{remark}

\subsection{The perverse filtration} We now briefly recall the construction of the perverse filtration. Let $f\colon X\rightarrow Y$ be a flat and projective morphism of algebraic varieties with at most finite quotient singularities and whose fibers also have at most finite quotient singularities.  Again let $n=\dim X$ and $d=\dim X\times_Y X - \dim X$. Given a complex $K^{\bullet}\in \mathcal{D}^b_c(Y) $, there is a natural morphism
\begin{equation*}
 ^{\mathfrak{p}}\tau _{\leq k}( K^{\bullet}) \longrightarrow  K^{\bullet}\ ,
\end{equation*}
which induces  a map on the level of hypercohomology:
\begin{align*}\label{hypercosimpl}
\bbH^{i}\big(Y, {}^{\mathfrak{p}}\tau _{\leq k}(K^{\bullet})\big) \longrightarrow \bbH^{i}(Y, K^{\bullet})\ .
\end{align*}
We consider the complex $K^{\bullet}$ to be the pushforward of the (shifted) intersection cohomology complex, which in our case is $\mathbf{R}f_* IC_X[-d] = \mathbf{R}f_*\mathbb{Q}_X[n-d]$
and obtain a map

\begin{equation}\label{hyperco}
    \mathbb{H}^{i-(n-d)} \big( Y, ^{\mathfrak{p}}\tau _{\leq k}(\mathbf{R}f_*\mathbb{Q}_X[n-d])\big) \longrightarrow \mathbb{H}^{i-(n-d)}\big(Y,\mathbf{R}f_\ast\Q_X[n-d]\big)\cong H^i(X, \mathbb{Q})
\end{equation}
where the last isomorphism comes from 
\begin{equation*}
\mathbb{H}^{i-(n-d)}\big(Y,\mathbf{R}f_\ast\Q_X[n-d]\big)\cong \mathbb{H}^i(Y, \mathbf{R}f_*\mathbb{Q}_X) \cong H^i(X, \mathbb{Q}) \ .
\end{equation*}

\begin{definition}[Perverse filtration]
The image of \eqref{hyperco} in the cohomology $H^i(X, \mathbb{Q})$ is denoted by 
\begin{equation*}
P^f_k\big(H^i(X, \mathbb{Q})\big) \subseteq H^i(X, \mathbb{Q})\ ,
\end{equation*} 
and defines a filtration indexed by $k$ on $H^i(X, \mathbb{Q})$, called the {\it perverse filtration} associated to $f$. 
\end{definition}

For more details on the construction of the perverse filtration we refer the reader e.g.\ to  \cite[Section 1.2 and 1.3]{Cataldo2019HitchinFA}). Since by the decomposition theorem the complex $\mathbf{R}f_*\mathbb{Q}[n]$ decomposes into a direct sum of its perverse cohomologies, which vanish outside the interval $[-d,d]$, we have $P_p^f \big(H^i(X,\Q)\big)=0$ for $p\leq -1$ and $P_p^f\big(H^i(X,\Q)\big)=H^i(X,\Q)$ for $p\geq 2d$. Notice that the shift makes sure that the perverse filtration is concentrated in degrees $[0,2d]$ rather than in $[-d,d]$.

\subsection{The perverse filtration for  abelian varieties} Let  $X$ be a complex abelian variety of dimension $g$ and write $\widehat{X}$ for its dual abelian variety. In this section we compute the perverse filtration associated to the spectral data morphism described in Section \ref{section_Hitchin}. 

\begin{proposition}\label{thm_Pfiltration}
The perverse filtration associated with the spectral data morphism
\begin{equation*}
\sd_X =  \mathrm{Sym}^r(\pr_2) : M_{\textrm{Dol}}^r(X)=\mathrm{Sym}^r(\widehat{X}\times  \mathbb{C}^g) \longrightarrow\mathrm{Sym}^r(\mathbb{C}^g)
\end{equation*}
described in Section \ref{section_Hitchin} is given by
\begin{equation}\label{perversekunneth}\begin{split}
P_k^{\sd}H^*\big(\mathrm{Sym}^r(\widehat{X}\times  \mathbb{C}^g), \mathbb{Q}\big) & = P_k^{\sd}H^*(\widehat{X}^r, \mathbb{Q}) ^{S_r} \ . 
\end{split}\end{equation}
\end{proposition}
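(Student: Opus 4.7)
The plan is to lift the spectral data morphism to its $S_r$-cover, namely the $r$-fold product map $p := (\pr_2)^r \colon (\widehat{X}\times\C^g)^r \to (\C^g)^r$, and then exploit two structural facts: the perverse filtration is compatible with finite group quotients, and the base $\C^{gr}$ is contractible so that the cohomology of the cover reduces to that of the fiber $\widehat{X}^r$.

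\textbf{Step 1.} I would begin from the commutative square already used in the proof of Proposition \ref{prop_Hitchin}:
\begin{equation*}
\begin{tikzcd}
(\widehat{X}\times\C^g)^r \arrow[r, "p"]\arrow[d, "q_1"'] & (\C^g)^r \arrow[d, "q_2"]\\
\mathrm{Sym}^r(\widehat{X}\times\C^g) \arrow[r, "\sd_X"'] & \mathrm{Sym}^r(\C^g)
\end{tikzcd}
\end{equation*}
where $q_1$ and $q_2$ are the finite $S_r$-quotient maps. Both symmetric products have only finite quotient singularities, and the fibers of $\sd_X$ (and of $p$) have the same property, so the hypotheses of Section \ref{section_perversefiltration} are satisfied. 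In particular, by rational smoothness, the intersection complexes coincide with $\Q[\dim]$ and the decomposition theorem recalled in Section \ref{section_decomposition} applies verbatim.

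\textbf{Step 2.} I would then invoke the standard fact that for a finite quotient $q\colon Z\to Z/G$ the pushforward $q_*\Q_Z$ decomposes into $G$-isotypic components with $\Q_{Z/G}$ as its trivial summand. Applied to $q_1$, together with $q_2\circ p = \sd_X\circ q_1$, this yields $\mathbf{R}(\sd_X)_*\Q_{\mathrm{Sym}^r(\widehat{X}\times\C^g)} = \big(q_{2*}\mathbf{R}p_*\Q_{(\widehat{X}\times\C^g)^r}\big)^{S_r}$. Since $q_2$ is finite (hence perverse $t$-exact) and taking $S_r$-invariants is an exact functor on $\Q$-vector spaces, perverse truncation commutes with both $q_{2*}$ and $(-)^{S_r}$. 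Passing to cohomology and invoking the description of the perverse filtration as the image of $\bbH^{\bullet-(n-d)}({}^{\mathfrak p}\tau_{\leq k}\mathbf{R}f_*\Q[n-d])$, this gives
\begin{equation*}
P_k^{\sd_X}H^*\big(\mathrm{Sym}^r(\widehat{X}\times\C^g),\Q\big) = \big(P_k^{p}H^*\big((\widehat{X}\times\C^g)^r,\Q\big)\big)^{S_r}.
\end{equation*}

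\textbf{Step 3.} The factor $\C^{gr}$ being contractible, the inclusion of any fiber gives an $S_r$-equivariant homotopy equivalence $\widehat{X}^r \simeq (\widehat{X}\times\C^g)^r$, and in particular an $S_r$-equivariant isomorphism $H^*((\widehat{X}\times\C^g)^r,\Q) \cong H^*(\widehat{X}^r,\Q)$. Under this identification the perverse filtration $P_k^{p}$ on $H^*((\widehat{X}\times\C^g)^r,\Q)$ is exactly what is denoted $P_k^{\sd}H^*(\widehat{X}^r,\Q)$ on the right-hand side of the proposition, yielding the claimed equality.

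\textbf{Main obstacle.} The principal technical hurdle is Step 2: justifying rigorously that the perverse truncation functors commute with both the finite pushforward $q_{2*}$ and the $S_r$-invariants functor so that the resulting equality of filtrations on cohomology is clean. This is essentially a consequence of \cite{BBD}, but one must be careful that the orbifold singularities of the symmetric products do not spoil the decomposition theorem - this is exactly where the rational smoothness of finite quotient singularities, recalled after Example \ref{example_constantmap}, enters and allows one to work with $\Q[\dim]$ in place of the intersection complex throughout.
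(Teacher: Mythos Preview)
Your proposal is correct and follows essentially the same route as the paper. The only difference is that the paper outsources your Step~2 to a single citation (\cite[Proposition~2.14]{ZHA}) for the compatibility of the perverse filtration with $S_r$-invariants, whereas you sketch that argument directly via $t$-exactness of the finite pushforward $q_{2*}$ and exactness of $(-)^{S_r}$; Step~3 then matches the paper's appeal to homotopy invariance verbatim.
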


\begin{proof}
By \cite[Proposition 2.14]{ZHA}, we have that  
\[ 
P_k^{\sd}H^*\big(M^r_{Dol}(X), \mathbb{Q}\big) = P_k^{\sd}H^*\big( \mathrm{Sym}^r(\widehat{X}\times  \mathbb{C}^g), \mathbb{Q} \big) \cong P_k^{\sd}H^*\big((\widehat{X}\times \mathbb{C}^g)^r, \mathbb{Q}\big)^{S_r} \ .
\]
and the homotopy invariance of singular cohomology yields
\begin{equation*} 
P_k^{\sd}H^*\big((\widehat{X}\times \mathbb{C}^g)^r, \mathbb{Q}\big) \cong P_k^{\sd}H^*\big(\widehat{X}^r\times \mathbb{C}^{gr}, \mathbb{Q}\big) \cong P_k^{\sd}H^*(\widehat{X}^r, \mathbb{Q}) 
\end{equation*}
since $\C^{gr}$ is contractible.
\end{proof}

By \cite[Corollary 2.2]{ZHA} we may rewrite \eqref{perversekunneth} as 
\begin{equation*}
P_k^{\sd} H^\ast\big(\Sym^r(\widehat{X}\times \C^g),\Q\big)=\big(\mathrm{Span}\{\alpha_1 \otimes...\otimes \alpha_r \ \text{s.t.} \ \mathfrak{p}(\alpha_1) + ...+ \mathfrak{p}(\alpha_r) \leq k\}\big)^{S_r} \label{perversekunneth}
\end{equation*}
where $\alpha_i\in H^*(\widehat{X},\mathbb{Q})$ and $\mathfrak{p}$ denotes the perversity 
of $\alpha_i$ (in this case the degree in the cohomology of $\widehat{X}$) for $i=1,...,r$.

\begin{proposition}
\label{prop_PfiltrationRankone}
If $r=1$ then the perverse filtration is trivial, that is,  we have
\begin{equation*}
P_kH^i\big(M_{\textrm{Dol}}^1(X),\Q\big)=\begin{cases} 0 & \textrm{ if } k\leq i-1 \\
H^i(\widehat{X},\Q) & \textrm{ if } k\geq i
\end{cases}
\end{equation*}
for all indices $k,i\in\Z$
\end{proposition}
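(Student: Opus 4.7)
My plan is to reduce the computation to an explicit application of the decomposition theorem. For $r=1$ the identification in Proposition \ref{prop_modulispace} gives $M^1_{\textrm{Dol}}(X)=\widehat{X}\times\C^g$, and the spectral data morphism becomes the second projection $\pr_2\colon\widehat{X}\times\C^g\to\C^g$. This is a trivial fibration with fiber the complex torus $\widehat{X}$; it is proper as a morphism of algebraic varieties by base change from $\widehat{X}\to\Spec\C$. The source has complex dimension $n=2g$, the target has dimension $g$, and the semi-small defect is $d=\dim(\widehat{X}^2\times\C^g)-\dim(\widehat{X}\times\C^g)=g$.

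First I would pin down the perverse cohomology sheaves. Since $\pr_2$ is a trivial $\widehat{X}$-bundle, each higher direct image $R^j(\pr_2)_*\Q$ is the constant sheaf with stalk $H^j(\widehat{X},\Q)$. Combining Section \ref{section_decomposition}(ii) with the shift identity \eqref{eq_shifts}, the perverse cohomology sheaves of $\mathbf{R}(\pr_2)_*\Q[n-d]=\mathbf{R}(\pr_2)_*\Q[g]$ appearing in the definition of the perverse filtration are
\begin{equation*}
{}^{\mathfrak{p}}\mathcal{H}^j\bigl(\mathbf{R}(\pr_2)_*\Q[g]\bigr)\;\cong\; H^j(\widehat{X},\Q)\otimes\Q_{\C^g}[g],\qquad 0\leq j\leq 2g.
\end{equation*}
Each summand is a constant sheaf shifted by $\dim\C^g$ and hence already perverse. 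By the decomposition theorem
\begin{equation*}
\mathbf{R}(\pr_2)_*\Q[g]\;\cong\;\bigoplus_{j=0}^{2g} H^j(\widehat{X},\Q)\otimes\Q_{\C^g}[g-j],
\end{equation*}
and the perverse truncation ${}^{\mathfrak{p}}\tau_{\leq k}$ retains exactly the summands with $j\leq k$.

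Next I would take hypercohomology, exploiting that $\C^g$ is contractible:
\begin{equation*}
\mathbb{H}^{i-g}\bigl(\C^g,\Q_{\C^g}[g-j]\bigr)=H^{i-j}(\C^g,\Q)
\end{equation*}
vanishes unless $j=i$, in which case it equals $\Q$. Summing the surviving indices gives
\begin{equation*}
\mathbb{H}^{i-g}\bigl(\C^g,{}^{\mathfrak{p}}\tau_{\leq k}\mathbf{R}(\pr_2)_*\Q[g]\bigr)\;\cong\;\begin{cases} H^i(\widehat{X},\Q) & \text{if } i\leq k,\\ 0 & \text{if } i>k.\end{cases}
\end{equation*}
Because the decomposition theorem identifies this group with a direct summand of $H^i(M^1_{\textrm{Dol}}(X),\Q)\cong H^i(\widehat{X},\Q)$, the natural comparison map is injective, and its image is precisely $P_kH^i$, which yields the claim.

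The only real subtlety is bookkeeping of the three shifts $[n]$, $[n-d]$ and the perverse shift $[\dim Y]$; and checking that, despite $\C^g$ being non-compact, $\pr_2$ is proper so that the decomposition theorem in the form of Section \ref{section_decomposition} applies. Both points are immediate, so I do not expect any serious obstacle. The essential mechanism is that the triviality of the fibration and the contractibility of the base together make $\mathbf{R}(\pr_2)_*\Q[g]$ split into constant summands and force each perverse piece to contribute to cohomology in a single, matching degree.
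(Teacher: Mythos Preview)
Your proof is correct and follows essentially the same approach as the paper: identify the rank-one spectral data morphism with the projection $\pr_2\colon\widehat{X}\times\C^g\to\C^g$, use the decomposition theorem to see that the perverse cohomology sheaves are the constant sheaves $H^j(\widehat{X},\Q)\otimes\Q_{\C^g}[g]$, and then take hypercohomology exploiting the contractibility of $\C^g$. The paper's proof is terser (it points to Example~\ref{example_constantmap} and states the hypercohomology of the truncation directly), whereas you spell out the splitting and the shift bookkeeping explicitly; but the mechanism is the same.
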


\begin{proof}
This follows from Example \ref{example_constantmap}. Indeed, if $r=1$ then the spectral data morphism (as well as the Hitchin morphism) is the projection $\pr_2\colon\widehat{X}\times \mathbb{C}^g \to \mathbb{C}^g$, hence the defect of semi-smallness $d=\dim \widehat{X}$ is maximal and the perverse filtration is induced by
\begin{align}\label{trivfiltmap}
\mathbb{H}^i\big(\mathbb{C}^g, {}^{\mathfrak{p}}\tau_{\leq k}(\mathbf{R}(\pr_2)_*\mathbb{Q})\big) \longrightarrow \mathbb{H}^i(\mathbb{C}^g, \mathbf{R}(\pr_2)_*\mathbb{Q})\ .
\end{align}
Since
\begin{equation*}
\mathbb{H}^i\big(\mathbb{C}^g, {}^{\mathfrak{p}}\tau _{\leq k}(\mathbf{R}(\pr_2)_*\mathbb{Q})\big)= \begin{cases} 0 & \textrm{ if } k\leq i-1 \\
H^i(\widehat{X}\times \mathbb{C}^g,\Q) & \textrm{ if } k\geq i
\end{cases}
\end{equation*}
and 
\begin{equation*} \mathbb{H}^i(\mathbb{C}^g, \mathbf{R}(\pr_2)_*\mathbb{Q}) \cong H^i(\widehat{X}\times \mathbb{C}^g,\Q)\end{equation*}
the map (\ref{trivfiltmap}) is zero for $k\leq i-1$ and the identity for $k\geq i$.
\end{proof}

\section{The weight filtration and a cohomological $P=W$ phenomenon}\label{section_weightfiltration}

In this section we use  the by now classical story of Deligne's weight filtration and its compatibility with the K\"unneth theorem to explicitly calculate the weight filtration for algebraic tori,
we  then explain how the weight filtration descends along finite group quotients. Theorem \ref{mainthm_P=W} follows by comparison with the analogous statements for the perverse filtration in Section \ref{section_perversefiltration}. We also prove Corollary \ref{maincor_curious}.

\subsection{Mixed Hodge structures on products}
\label{section_MHSproduct} Let $X$ be a quasi-projective variety. Deligne's classical results \cite{Deligne_HodgeII, Deligne_HodgeIII} (also see \cite{Durfee, PetersSteenbrink}) tell us that every cohomology group $H^i(X,\Q)$ admits a natural filtration 
\begin{equation*}
0=W_{-1}H^i(X,\Q)\subseteq W_{0}H^i(X,\Q) \subseteq \cdots \subseteq W_{2i}H^i(X,\Q)= H^i(X,\Q)
\end{equation*}
by linear subspaces such that the quotients
\begin{equation*}
\Gr_k^W H^i(X,\Q)=W_kH^i(X,\Q)/ W_{k-1}H^i(X,\Q)
\end{equation*}
carry a pure Hodge structure of weight $k$.
This association is functorial with respect to algebraic morphisms and compatible with cup product; hence it makes sense to write   
\begin{equation*}
W_kH^\ast(X,\Q)=\bigoplus_iW_kH^i(X,\Q) \ .
\end{equation*}
The weight filtration is also compatible with the K\"unneth formula, which means  
\begin{equation*}
W_k H^\ast(X\times Y)\cong \bigoplus_{s+t\leq k}W_sH^\ast(X) \otimes W_t H^\ast(X) 
\end{equation*}
for quasi-projective varieties $X$ and $Y$ (see \cite[Theorem 5.44]{PetersSteenbrink}). 

Recall now that for an algebraic torus $T=(\mathbb{C}^\ast)^m$  we have
\begin{equation*}
H^\ast(T,\Q)=\bigoplus_{i=0}^m\bigwedge^i\Q^m
\end{equation*}
by the K\"unneth formula,  generated in degree $1$. The following Proposition \ref{prop_weightproduct} describes the weight filtration on the cohomology of $T$.

\begin{proposition}\label{prop_weightproduct}
Let $T=(\mathbb{C}^\ast)^m$. Then the weight filtration is given by 
\begin{equation*}
W_{2k}H^\ast(T,\Q)=W_{2k+1} H^\ast(T,\Q)=\bigoplus_{i=0}^{k}\bigwedge^i\mathbb{Q}^m=\bigoplus_{i=0}^{k} H^i(T,\Q) \ .
\end{equation*}
\end{proposition}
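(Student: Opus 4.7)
The plan is to reduce to the one-dimensional case via the K\"unneth compatibility of the weight filtration already recalled in Section~\ref{section_MHSproduct}, and then to use the explicit description of the mixed Hodge structure on $H^\ast(\C^\ast,\Q)$ to pin down the weights on each factor.

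First I would treat the base case $m=1$. The cohomology of $\C^\ast$ is $H^0(\C^\ast,\Q)=\Q$ and $H^1(\C^\ast,\Q)=\Q$, and I would show that $H^0$ is pure of weight $0$ while $H^1$ is pure of weight $2$. For the latter, the standard way is to use the smooth compactification $\C^\ast\hookrightarrow \P^1$ with simple normal crossing boundary $D=\{0,\infty\}$ and to apply Deligne's formula for the mixed Hodge structure on an SNC complement: the weight filtration on $H^1$ is determined by the Gysin sequence
\begin{equation*}
H^0(\P^1,\Q)\longrightarrow H^0(D,\Q)(-1)\longrightarrow H^1(\C^\ast,\Q)\longrightarrow H^1(\P^1,\Q),
\end{equation*}
which identifies $H^1(\C^\ast,\Q)$ with a quotient of the Tate twist $H^0(D,\Q)(-1)$ and therefore places it in pure weight $2$. (Equivalently, one notes that $H^1(\C^\ast,\C)=H^{1,1}$.)

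Next I would bootstrap to arbitrary $m$ via K\"unneth. The K\"unneth isomorphism is an isomorphism of mixed Hodge structures (Section~\ref{section_MHSproduct}), so
\begin{equation*}
H^i(T,\Q)\;\cong\;\bigoplus_{j_1+\cdots+j_m=i} H^{j_1}(\C^\ast,\Q)\otimes\cdots\otimes H^{j_m}(\C^\ast,\Q).
\end{equation*}
Each factor $H^{j_k}(\C^\ast,\Q)$ is pure of weight $2j_k$ by the previous step, so their tensor product is pure of weight $2i$. Consequently, $H^i(T,\Q)$ is itself pure of weight $2i$, and the weight filtration on the total cohomology jumps only at even weights.

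Combining everything, $W_{2k}H^\ast(T,\Q)$ contains exactly those $H^i$ with $2i\leq 2k$, while nothing new is added between $W_{2k}$ and $W_{2k+1}$. Hence
\begin{equation*}
W_{2k}H^\ast(T,\Q)=W_{2k+1}H^\ast(T,\Q)=\bigoplus_{i=0}^{k}H^i(T,\Q)=\bigoplus_{i=0}^{k}\bigwedge^i\Q^m,
\end{equation*}
using the standard identification $H^\ast(T,\Q)=\bigwedge^\ast H^1(T,\Q)=\bigwedge^\ast \Q^m$. The only nontrivial input is the weight of $H^1(\C^\ast,\Q)$; this is classical and presents no real obstacle, so the argument is essentially a bookkeeping exercise once K\"unneth compatibility is invoked.
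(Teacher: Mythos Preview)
Your proposal is correct and follows essentially the same strategy as the paper: establish the $m=1$ case via the compactification $\C^\ast\hookrightarrow\P^1$ and its associated long exact (Gysin/localization) sequence to see that $H^1(\C^\ast,\Q)$ is pure of weight $2$, then propagate to arbitrary $m$ using the compatibility of the weight filtration with the K\"unneth formula. The only cosmetic difference is that the paper writes the localization sequence with the local cohomology term $H^2_{\{0,\infty\}}(\P^1,\Q)$ rather than the Tate-twisted $H^0(D,\Q)(-1)$, but these encode the same information.
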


Proposition \ref{prop_weightproduct} is well-known to experts in the field; we give a proof for lack of a suitable reference.

\begin{proof}
We first note that in the case $m=1$ we have $H^\ast(\C^\ast,\Q)=\Q\oplus \Q$, since $\C^\ast$ deformation retracts to $S^1$. Consider the inclusion $i\colon\C^\ast\hookrightarrow\P^1$. The pullback along $i$ induces a long exact sequence
\begin{equation*}
 \underbrace{H^1(\P^1,\Q)}_{=0}\longrightarrow \underbrace{H^1(\C^\ast,\Q)}_{=\Q}\longrightarrow \underbrace{H^2_{\{0,\infty\}}(\P^1,\Q)}_{=\Q^2}\longrightarrow \underbrace{H^2(\P^1,\Q)}_{=\Q}\longrightarrow \underbrace{H^2(\C^\ast,\Q)}_{=0} \ .
\end{equation*}
So the weight filtration on $H^\ast(\C^\ast,\Q)$ is given by
\begin{equation*}
W_0 H^\ast(\C^\ast,\Q) = \Q\ \qquad W_1 H^\ast(\C^\ast,\Q) = \Q \  \qquad W_2 H^\ast(\C^\ast,\Q) = \Q\oplus\Q 
\end{equation*} 
(see \cite[Example 3.4.9]{ElZeinLe_MHS} for details). So our result is true for $m=1$. For general $m\geq 1$ 
the K\"unneth formula is  compatible with the weight filtration and so we have
\begin{equation*}
W_{2k} H^i\big((\C^\ast)^m,\Q\big)= W_{2k+1} H^i\big((\C^\ast)^m,\Q\big)=\bigoplus_{i=0}^{k}\bigwedge^i\Q^m=\bigoplus_{i=0}^k H^i(T,\Q) \ .
\end{equation*}
\end{proof}

\subsection{Mixed Hodge structures and finite group quotients}\label{section_MHSquot}
Let $G$ be a finite group acting on a quasi-projective algebraic variety $X$. We write 
\begin{equation*}
H^\ast_G(X,\mathbb{Q})=H^\ast (EG\times_GX,\mathbb{Q})
\end{equation*}
for the equivariant cohomology ring with rational coefficients. Here $EG$ is the universal principal $G$-bundle over the classifying space $BG$ of $G$ and $EG\times_G X$ is the quotient of $EG\times X$ by the diagonal action of $G$. By the Vietoris--Begle theorem \cite[p. 344]{Spanier} the pullback along $EG\times_G X\rightarrow X/G$ induces a natural isomorphism
\begin{equation*}
H^\ast(X/G,\mathbb{Q})\xlongrightarrow{\sim} H^\ast_G(X,\Q) \ .
\end{equation*}
Moreover, using the Serre spectral sequence, one may deduce that $H^\ast_G(X,\Q)\cong H^\ast (X,\Q)^G$ and that the composition
\begin{equation*}
H^\ast(X/G,\mathbb{Q})\xlongrightarrow{\sim} H^\ast_G(X,\Q)\cong H^\ast (X,\Q)^G
\end{equation*}
is induced by the pullback along the (algebraic) quotient map $\pi\colon X\rightarrow X/G$ (see \cite[Proposition 4.3]{FlorentinoSilva} for details). Since $\pi$ is algebraic, the pullback homomorphism is compatible with the weight filtrations and thus we obtain
\begin{equation*}
W_kH^\ast(X/G,\Q)\cong \big(W_k H^\ast(X,\Q)\big)^G \ .
\end{equation*}

\subsection{Proof of Theorem \ref{mainthm_P=W}} Let $X=\C^g/L$ be a complex abelian variety of dimension $g$ and write $\widehat{X}$ for its dual. By Proposition \ref{prop_nonAbelianHodge}, both $M_{\textrm{Dol}}^r(X)$ and $M_{\textrm{Betti}}^r(X)$ are $r$-fold symmetric products. We have $M_{\textrm{Betti}}^r(X)=\Sym^r\big(M_{\textrm{Betti}}^1(X)\big)$ and Section \ref{section_MHSproduct} and \ref{section_MHSquot} show that the weight filtration is compatible with symmetric products; thus we find: 
\begin{equation*}
W_{k}H^\ast(M^r_{\textrm{Betti}}(X))\cong\big(W_{k} H^\ast\big((M^1_{\textrm{Betti}}(X))^r\big)\big)^{S_r} \ .
\end{equation*}
On the other hand, we have $M_{Dol}^r(X)=\Sym^r\big(M_{Dol}^1(X)\big)$ and by Proposition  \ref{thm_Pfiltration} the perverse filtration is compatible with symmetric powers:
\begin{equation*}
P_kH^*(M_{\textrm{Dol}}^r(X), \mathbb{Q})  \cong \big(P_kH^*\big((M^1_{\textrm{Dol}})^r, \mathbb{Q}\big) \big)^{S_r} \ .
\end{equation*}

Since the non-abelian Hodge correspondence is compatible with the symmetric products on both sides by Proposition \ref{prop_nonAbelianHodge}, it is thus enough to prove Theorem \ref{mainthm_P=W} in the case $r=1$. 
Here the non-abelian Hodge correspondence induces a real analytic isomorphism \begin{equation*}
(\C^\ast)^{2g}=M_{\textrm{Betti}}^1(X)\cong M_{\textrm{Dol}}^{1}(X)=\widehat{X}\times \mathbb{C}^g 
\end{equation*}
that induces an isomorphism
\begin{equation*}
H^\ast\big((\C^\ast)^{2g},\Q\big)=H^\ast\big(M_{\textrm{Betti}}^1(X)\big)\cong H^\ast\big(M_{\textrm{Dol}}^{1}(X)\big)=H^\ast\big(\widehat{X}\times \mathbb{C}^g \big)
\end{equation*}
of cohomology algebras. Write $T=(\mathbb{C}^\ast)^{2g}$. By Proposition \ref{prop_weightproduct}, the weight filtration on the left is given as
\begin{equation*}
W_{2k}H^\ast(T,\Q)=W_{2k+1} H^\ast(T,\Q)=\bigoplus_{i=0}^{k}\bigwedge^i\mathbb{Q}^{2g}=\bigoplus_{i=0}^{k} H^i(T,\Q) 
\end{equation*}
and, by Proposition \ref{prop_PfiltrationRankone}, the perverse filtration on the right as 
\begin{equation*}
P_{k} H^\ast(\widehat{X},\Q)=\bigoplus_{i=0}^{k} H^i(\widehat{X},\Q)=\bigoplus_{i=0}^{k}\bigwedge^i\mathbb{Q}^{2g} \ .
\end{equation*}
Thus, since the different subspaces in both the perverse and the weight filtration may be distinguished by degree,  the non-abelian Hodge correspondence induces an identification 
\begin{equation*}
P_{k} H^\ast\big(M_{\textrm{Dol}}^1(X),\Q\big)=W_{2k}H^\ast\big(M_{\textrm{Betti}}^1(X),\Q\big) =W_{2k+1}H^\ast\big(M_{\textrm{Betti}}^1(X),\Q\big)
\end{equation*}
for all $k\in\Z$.

\subsection{Curious Poincar\'e and curious hard Lefschetz} Let $X$ be a complex quasi-projective algebraic variety. Recall that  the \emph{mixed Hodge numbers} of $X$ are defined by
\begin{equation*}
h^{p,q;j}(X)=h^{p,q}\big( \Gr_{p+q}^WH^j(X,\Q) \big)\ , 
\end{equation*}
where $h^{p,q}$ on the right denotes the dimension of the $(p,q)$-th entry of the pure Hodge structure on $\Gr_{p+q}^WH^j(X,\Q)$. The \emph{mixed Hodge polynomial} is defined to be
\begin{equation*}
H(X;x,y,t):=\sum h^{p,q;j}(X)x^py^qt^j \ .
\end{equation*}
We say that $X$ is \emph{of Hodge--Tate type}, if $h^{p,q;j}(X)=0$, unless $p=q$. In this case the mixed Hodge polynomial $H(X;x,y,t)$ only depends on $xy$ and $t$ and we set 
\begin{equation*}
H(X;q,t):=H(X;\sqrt{q},\sqrt{q},t)
\end{equation*}
for a formal variable $q$.

\begin{proof}[Proof of Theorem \ref{mainthm_curious}]
Let $X=\C^g/L$ be a complex abelian variety of dimension $g\geq 1$. By \cite[Proposition 5.11]{FlorentinoSilva} the character variety $M_{\textrm{Betti}}^r(X)$ is of Hodge--Tate type and the mixed Hodge polynomial of $M_{\textrm{Betti}}^r(X)$ is given by 
\begin{equation*}
H\big(M_{\textrm{Betti}}^r(X); q,t)=\frac{1}{n!}\sum_{\sigma\in S_n}\big(\det(I_n-tqA_\sigma)\big)^{2g} \ ,
\end{equation*}
where $A_\sigma$ denotes the permutation matrix associated to $\sigma\in S_n$. Thus we can calculate
\begin{equation*}\begin{split}
H\big(M_{\textrm{Betti}}^r(X); \frac{1}{qt
^2},t)&=\frac{1}{n!}\sum_{\sigma\in S_n}\big(\det(I_n-\frac{1}{qt}A_\sigma)\big)^{2g}\\
&=\frac{1}{(qt)^{2gr}} \cdot \frac{1}{n!} \sum_{\sigma\in S_n}\big(\det(qtI_n-A_\sigma)\big)^{2g}\\
&=\frac{1}{(qt)^{2gr}} \cdot \frac{1}{n!} \sum_{\sigma\in S_n}\big(\det(I_n-qtA_\sigma)\big)^{2g}\\
&=\frac{1}{(qt)^{2gr}} \cdot H\big(M_{\textrm{Betti}}^r(X); q,t) \ .
\end{split}\end{equation*}
This proves Part (i). For Part (ii) we choose a hyperplane class $L\in H^2\big(M_{\textrm{Dol}}^r(X),\Q\big)$ and  note that the relative hard Lefschetz theorem (see \eqref{eq_relativehardLefschetz} in Section \ref{section_decomposition} for a version that applies to our situation) tells us that the $k$-fold application of $L$ defines an isomorphism 
\begin{equation*}
L^k\colon \Gr_{gr-k}^PH^\ast\big(M_{\textrm{Dol}}^r(X),\Q\big)\xlongrightarrow{\sim}\Gr_{gr+k}^PH^{\ast+2k}\big(M_{\textrm{Betti}}^r(X),\Q\big) \ .
\end{equation*}
Under the identification of the perverse with the weight filtration in Theorem \ref{mainthm_P=W} this becomes an isomorphism
\begin{equation*}
L^k\colon \Gr_{2gr-2k}^WH^\ast\big(M_{\textrm{Betti}}^r(X),\Q\big)\xlongrightarrow{\sim}\Gr_{2gr+2k}^WH^{\ast+2k}\big(M_{\textrm{Betti}}^r(X),\Q\big) \ ,
\end{equation*}
which is our claim.
\end{proof}

\subsection{An application: Higgs bundles of rank one on Jacobians} Let $X$ be a compact Riemann surface of genus $g\geq 1$. Theorem \ref{mainthm_P=W} implies a well-known version of the $P=W$ phenomenon for rank one Higgs bundles on $X$. Indeed, for $r=1$ and $d=0$, we canonically have
\begin{equation*}
M_{\textrm{Dol}}^{1,0}(X)\cong \Jac(X)\times H^0(X,K_X)\cong \Jac(X)\times H^{1,0}\big(\Jac(X)\big)\cong M_{\textrm{Dol}}^1\big(\Jac(X)\big)
\end{equation*}
by autoduality of $\Jac(X)$ and for the Betti moduli space
\begin{equation*}
M_{\textrm{Betti}}^{1,0}(X)\cong \Hom\big(\pi_1(X),\mathbb{C}^\ast\big)\cong\Hom(\pi_1(\Jac(X)),\mathbb{C}^\ast\big)=M_{\textrm{Betti}}^1(\Jac(X))\cong (\mathbb{C}^\ast)^{2g} \ ,
\end{equation*}
since canonically $\pi_1(X)^{ab}\cong\pi_1\big(\Jac(X)\big)$. Therefore we obtain:

\begin{corollary}\label{cor_rankone}
Let $X$ be a compact Riemann surface of genus $g\geq 1$. Then, under the identification \begin{equation*}
H^\ast\big(M_{\textrm{Dol}}^1(X),\Q\big)\cong H^\ast\big(M_{\textrm{Betti}}^1(X),\Q\big)\ ,
\end{equation*} we have 
\begin{equation*}
P_{k} H^\ast\big(M_{\textrm{Dol}}^{1,0}(X),\Q\big)=W_{2k}H^\ast\big(M_{\textrm{Betti}}^{1,0}(X),\Q\big)=W_{2k+1}H^\ast\big(M_{\textrm{Betti}}^{1,0}(X),\Q\big) 
\end{equation*}
for all $k\in\Z$.
\end{corollary}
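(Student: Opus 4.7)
The plan is to reduce the statement directly to Theorem \ref{mainthm_P=W} applied to the Jacobian $\Jac(X)$, which is a complex abelian variety of dimension $g$, in the rank one case $r=1$. The canonical isomorphisms
\begin{equation*}
M_{\textrm{Dol}}^{1,0}(X)\cong M_{\textrm{Dol}}^1\big(\Jac(X)\big) \quad\text{and}\quad M_{\textrm{Betti}}^{1,0}(X)\cong M_{\textrm{Betti}}^1\big(\Jac(X)\big)
\end{equation*}
recorded in the paragraph preceding the corollary already translate the problem from $X$ to $\Jac(X)$, so what remains is to check that the perverse and weight filtrations also carry over.

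First I would verify that these identifications are compatible with Simpson's non-abelian Hodge correspondence, so that the identification of $H^\ast\big(M_{\textrm{Dol}}^{1,0}(X),\Q\big)$ with $H^\ast\big(M_{\textrm{Betti}}^{1,0}(X),\Q\big)$ induced by $\eta_X^{1,0}$ agrees with the identification produced by $\eta_{\Jac(X)}^1$ as made explicit in Section \ref{section_explicitNAH}. In rank one, both correspondences are built from the same ingredients, namely the polar decomposition $\C^\ast\cong S^1\times\R$ together with the Hodge decomposition of $H^1$; moreover the Abel--Jacobi map yields canonical identifications $\pi_1(X)^{ab}\cong\pi_1(\Jac(X))$ and $\Pic_0(X)\cong \widehat{\Jac(X)}$, so the two correspondences must match on the nose by naturality.

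Second, I would match the filtrations. On the Dolbeault side, the rank one Hitchin morphism on $M_{\textrm{Dol}}^{1,0}(X)$ is the projection onto $H^0(X,K_X)\cong \C^g$, which under our identification coincides with the spectral data morphism $\Jac(X)\times\C^g\to\C^g$; by Proposition \ref{prop_PfiltrationRankone} the induced perverse filtration on both sides is simply the filtration by cohomological degree. On the Betti side, the identification $M_{\textrm{Betti}}^{1,0}(X)\cong M_{\textrm{Betti}}^1\big(\Jac(X)\big)\cong (\C^\ast)^{2g}$ is an isomorphism of quasi-projective varieties, so Deligne's weight filtration transports along it. Combining these observations with the first step, the statement of the corollary reduces to Theorem \ref{mainthm_P=W} for $\Jac(X)$ with $r=1$.

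The main obstacle is really just the bookkeeping involved in the first step: verifying that the two rank one non-abelian Hodge correspondences coincide under the chosen identifications. Once Section \ref{section_explicitNAH} is unpacked in this case, however, this is a routine consistency check rather than a substantive difficulty.
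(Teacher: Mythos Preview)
Your proposal is correct and follows essentially the same approach as the paper: the paper also derives the corollary directly from Theorem \ref{mainthm_P=W} applied to $\Jac(X)$ with $r=1$, via the canonical identifications $M_{\textrm{Dol}}^{1,0}(X)\cong M_{\textrm{Dol}}^1(\Jac(X))$ and $M_{\textrm{Betti}}^{1,0}(X)\cong M_{\textrm{Betti}}^1(\Jac(X))$ recorded just before the statement. Your additional bookkeeping about compatibility of the two rank-one non-abelian Hodge correspondences and the matching of filtrations is more explicit than the paper, which simply treats these as implicit in the canonical identifications.
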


The statement of Corollary \ref{cor_rankone} is of course well-known to experts; a somewhat different approach to its proof can be found in \cite[Theorem 5.9]{ZHA}. For a geometric $P=W$ phenomenon in rank one, we refer the reader to \cite[Theorem B]{MMS_geometricP=W}.









\bibliographystyle{amsalpha}
\bibliography{biblio}{}

\providecommand{\bysame}{\leavevmode\hbox to3em{\hrulefill}\thinspace}
\providecommand{\MR}{\relax\ifhmode\unskip\space\fi MR }
\providecommand{\MRhref}[2]{%
  \href{http://www.ams.org/mathscinet-getitem?mr=#1}{#2}
}
\providecommand{\href}[2]{#2}
\begin{thebibliography}{GKUW23}

\bibitem[BBD82]{BBD}
A.~A. Be\u{\i}linson, J.~Bernstein, and P.~Deligne, \emph{Faisceaux pervers},
  Analysis and topology on singular spaces, {I} ({L}uminy, 1981),
  Ast\'{e}risque, vol. 100, Soc. Math. France, Paris, 1982, pp.~5--171.

\bibitem[Bri99]{Bri}
M.~Brion, \emph{Rational smoothness and fixed points of torus actions},
  Transformation Groups \textbf{4} (1999), no.~2, 127--156.

\bibitem[CN20]{CN_Hitchin}
T.~H. Chen and B.~C. Ng\^{o}, \emph{On the {H}itchin morphism for
  higher-dimensional varieties}, Duke Math. J. \textbf{169} (2020), no.~10,
  1971--2004.

\bibitem[Cor88]{Corlette}
Kevin Corlette, \emph{Flat {$G$}-bundles with canonical metrics}, J.
  Differential Geom. \textbf{28} (1988), no.~3, 361--382. \MR{965220}

\bibitem[DC09]{DC}
M.~De~Cataldo, \emph{Lectures on perverse sheaves and decomposition theorem}.

\bibitem[dC21]{deCataldo_compactification}
Mark Andrea~A. de~Cataldo, \emph{Projective compactification of {D}olbeault
  moduli spaces}, Int. Math. Res. Not. IMRN (2021), no.~5, 3543--3570.

\bibitem[dCHM12]{dCHM_P=Wn=2}
Mark Andrea~A. de~Cataldo, Tam\'{a}s Hausel, and Luca Migliorini,
  \emph{Topology of {H}itchin systems and {H}odge theory of character
  varieties: the case {$A_1$}}, Ann. of Math. (2) \textbf{175} (2012), no.~3,
  1329--1407.

\bibitem[dCHM13]{dCHM_exchange}
\bysame, \emph{Exchange between perverse and weight filtration for the
  {H}ilbert schemes of points of two surfaces}, J. Singul. \textbf{7} (2013),
  23--38.

\bibitem[dCM05]{IntersectionFT}
Mark~Andrea de~Cataldo and Luca Migliorini, \emph{Intersection forms, topology
  of maps and motivic decomposition for maps of threefolds}, arXiv: Algebraic
  Geometry (2005).

\bibitem[dCMS19]{Cataldo2019HitchinFA}
Mark~Andrea de~Cataldo, Davesh Maulik, and Junliang Shen, \emph{Hitchin
  fibrations, abelian surfaces, and the p=w conjecture}, arXiv: Algebraic
  Geometry (2019).

\bibitem[dCMS22]{dCMS_g=2pprime}
Mark~Andrea de~Cataldo, Davesh Maulik, and Junliang Shen, \emph{On the {$\rm
  P=W$} conjecture for {$SL_n$}}, Selecta Math. (N.S.) \textbf{28} (2022),
  no.~5, Paper No. 90, 21.

\bibitem[Del71]{Deligne_HodgeII}
Pierre Deligne, \emph{Th\'{e}orie de {H}odge. {II}}, Inst. Hautes \'{E}tudes
  Sci. Publ. Math. (1971), no.~40, 5--57.

\bibitem[Del74]{Deligne_HodgeIII}
\bysame, \emph{Th\'{e}orie de {H}odge. {III}}, Inst. Hautes \'{E}tudes Sci.
  Publ. Math. (1974), no.~44, 5--77.

\bibitem[dFKX17]{dFKX_dualcomplexes}
Tommaso de~Fernex, J\'{a}nos Koll\'{a}r, and Chenyang Xu, \emph{The dual
  complex of singularities}, Higher dimensional algebraic geometry---in honour
  of {P}rofessor {Y}ujiro {K}awamata's sixtieth birthday, Adv. Stud. Pure
  Math., vol.~74, Math. Soc. Japan, Tokyo, 2017, pp.~103--129.

\bibitem[Don87]{Donaldson}
S.~K. Donaldson, \emph{Twisted harmonic maps and the self-duality equations},
  Proc. London Math. Soc. (3) \textbf{55} (1987), no.~1, 127--131.

\bibitem[Dur83]{Durfee}
Alan~H. Durfee, \emph{A naive guide to mixed {H}odge theory}, Singularities,
  {P}art 1 ({A}rcata, {C}alif., 1981), Proc. Sympos. Pure Math., vol.~40, Amer.
  Math. Soc., Providence, RI, 1983, pp.~313--320.

\bibitem[EZT14]{ElZeinLe_MHS}
Fouad El~Zein and L\^{e}~D{\~{u}}ng Tr\'{a}ng, \emph{Mixed {H}odge structures},
  Hodge theory, Math. Notes, vol.~49, Princeton Univ. Press, Princeton, NJ,
  2014, pp.~123--216.

\bibitem[Fel23]{Felisetti_P=Wsurvey}
Camilla Felisetti, \emph{\${P}={W}\$ phenomena in algebraic and enumerative
  geometry}, September 2023, arXiv:2309.15061 [math].

\bibitem[FGPN14]{FGPN}
Emilio Franco, Oscar Garcia-Prada, and P.~E. Newstead, \emph{Higgs bundles over
  elliptic curves}, Illinois J. Math. \textbf{58} (2014), no.~1, 43--96.

\bibitem[FM22]{FelisettiMauri}
Camilla Felisetti and Mirko Mauri, \emph{{${\rm P}={\rm W}$} conjectures for
  character varieties with symplectic resolution}, J. \'{E}c. polytech. Math.
  \textbf{9} (2022), 853--905.

\bibitem[FS21]{FlorentinoSilva}
Carlos Florentino and Jaime Silva, \emph{Hodge-{D}eligne polynomials of
  character varieties of free abelian groups}, Open Math. \textbf{19} (2021),
  no.~1, 338--362.

\bibitem[FT17]{FT}
Emilio Franco and Pietro Tortella, \emph{Moduli spaces of {$\Lambda$}-modules
  on abelian varieties}, Advances in Mathematics \textbf{318} (2017), 459--496.

\bibitem[GKUW23]{GKUW_semihomogeneous}
Andreas Gross, Inder Kaur, Martin Ulirsch, and Annette Werner,
  \emph{Semi-homogeneous vector bundles on abelian varieties: moduli spaces and
  their tropicalization}, 2023, arXiv:2312.12980.

\bibitem[GM83]{GM}
Mark Goresky and R.~MacPherson, \emph{Intersection homology ii.}, Inventiones
  mathematicae \textbf{72} (1983), 77--130.

\bibitem[GM99]{GelM}
S.I. Gelfand and Yu.~I. Manin, \emph{Homological algebra}, Springer, Berlin New
  York Heidelberg, 1999 (eng).

\bibitem[Har17]{Harper}
Alicia Harper, \emph{Factorization for stacks and boundary complexes}, June
  2017, arXiv:1706.07999 [math].

\bibitem[Hau98]{Hausel_compactification}
Tam\'{a}s Hausel, \emph{Compactification of moduli of {H}iggs bundles}, J.
  Reine Angew. Math. \textbf{503} (1998), 169--192.

\bibitem[Hit87]{Hitchin}
Nigel Hitchin, \emph{Stable bundles and integrable systems}, Duke Math. J.
  \textbf{54} (1987), no.~1, 91--114.

\bibitem[HLSY21]{HLSY_hyperKaehlerP=W}
Andrew Harder, Zhiyuan Li, Junliang Shen, and Qizheng Yin, \emph{{$P = W$} for
  {L}agrangian fibrations and degenerations of hyper-{K}\"{a}hler manifolds},
  Forum Math. Sigma \textbf{9} (2021), Paper No. e50, 6.

\bibitem[HMMS22]{HMMS_P=W}
Tamas Hausel, Anton Mellit, Alexandre Minets, and Olivier Schiffmann,
  \emph{${P}={W}$ via ${H}_2$}, arXiv:2209.05429 [math].

\bibitem[HRV08]{HauselRodriguezVillegas}
Tam\'{a}s Hausel and Fernando Rodriguez-Villegas, \emph{Mixed {H}odge
  polynomials of character varieties}, Invent. Math. \textbf{174} (2008),
  no.~3, 555--624, With an appendix by Nicholas M. Katz.

\bibitem[KM98]{KollarMori}
J\'{a}nos Koll\'{a}r and Shigefumi Mori, \emph{Birational geometry of algebraic
  varieties}, Cambridge Tracts in Mathematics, vol. 134, Cambridge University
  Press, Cambridge, 1998, With the collaboration of C. H. Clemens and A. Corti,
  Translated from the 1998 Japanese original.

\bibitem[KNPS15]{KNPS}
Ludmil Katzarkov, Alexander Noll, Pranav Pandit, and Carlos Simpson,
  \emph{Harmonic maps to buildings and singular perturbation theory}, Comm.
  Math. Phys. \textbf{336} (2015), no.~2, 853--903.

\bibitem[KS06]{KontsevichSoibelman}
Maxim Kontsevich and Yan Soibelman, \emph{Affine structures and
  non-{A}rchimedean analytic spaces}, The unity of mathematics, Progr. Math.,
  vol. 244, Birkh\"{a}user Boston, Boston, MA, 2006, pp.~321--385. \MR{2181810}

\bibitem[KX16]{KollarXu}
J\'{a}nos Koll\'{a}r and Chenyang Xu, \emph{The dual complex of {C}alabi-{Y}au
  pairs}, Invent. Math. \textbf{205} (2016), no.~3, 527--557.

\bibitem[Lan04]{Langer_boundedness}
Adrian Langer, \emph{Semistable sheaves in positive characteristic}, Ann. of
  Math. (2) \textbf{159} (2004), no.~1, 251--276.

\bibitem[Mat59]{Matsushima}
Yoz\^{o} Matsushima, \emph{Fibr\'{e}s holomorphes sur un tore complexe}, Nagoya
  Math. J. \textbf{14} (1959), 1--24.

\bibitem[Max19]{Maxim_intersection&perverse}
Lauren{\c{t}}iu~G. Maxim, \emph{Intersection homology \& perverse sheaves.
  {With} applications to singularities}, Grad. Texts Math., vol. 281, Cham:
  Springer, 2019.

\bibitem[Mig17]{Migliorini_survey}
Luca Migliorini, \emph{Recent results and conjectures on the non abelian
  {H}odge theory of curves}, Boll. Unione Mat. Ital. \textbf{10} (2017), no.~3,
  467--485.

\bibitem[Miy73]{Miyanishi}
Masayoshi Miyanishi, \emph{Some remarks on algebraic homogeneous vector
  bundles}, Number theory, algebraic geometry and commutative algebra, in honor
  of {Y}asuo {A}kizuki, Kinokuniya, Tokyo, 1973, pp.~71--93.

\bibitem[MMS22]{MMS_geometricP=W}
Mirko Mauri, Enrica Mazzon, and Matthew Stevenson, \emph{On the geometric
  {${\rm P}={\rm W}$} conjecture}, Selecta Math. (N.S.) \textbf{28} (2022),
  no.~3, Paper No. 65, 45.

\bibitem[MN84]{MehtaNori}
V.~B. Mehta and M.~V. Nori, \emph{Semistable sheaves on homogeneous spaces and
  abelian varieties}, Proc. Indian Acad. Sci. Math. Sci. \textbf{93} (1984),
  no.~1, 1--12.

\bibitem[MN15]{MustataNicaise}
Mircea Musta\c{t}\u{a} and Johannes Nicaise, \emph{Weight functions on
  non-{A}rchimedean analytic spaces and the {K}ontsevich-{S}oibelman skeleton},
  Algebr. Geom. \textbf{2} (2015), no.~3, 365--404.

\bibitem[Mor59]{Morimoto}
Akihiko Morimoto, \emph{Sur la classification des espaces fibr\'{e}s vectoriels
  holomorphes sur un tore complexe admettant des connexions holomorphes},
  Nagoya Math. J. \textbf{15} (1959), 83--154.

\bibitem[MS22]{MaulikShen_smoothP=W}
Davesh Maulik and Junliang Shen, \emph{The ${P}={W}$ conjecture for
  $\mathrm{GL}_n$}, arXiv:2209.02568 [math].

\bibitem[MSY23]{MSY}
Davesh Maulik, Junliang Shen, and Qizheng Yin, \emph{Perverse filtrations and
  {Fourier} transforms}, August 2023, arXiv:2308.13160 [math].

\bibitem[Muk78]{Mukai_semihomogeneous}
Shigeru Mukai, \emph{Semi-homogeneous vector bundles on an {A}belian variety},
  J. Math. Kyoto Univ. \textbf{18} (1978), no.~2, 239--272.

\bibitem[NXY19]{NicaiseXuYu}
Johannes Nicaise, Chenyang Xu, and Tony~Yue Yu, \emph{The non-archimedean {SYZ}
  fibration}, Compos. Math. \textbf{155} (2019), no.~5, 953--972.

\bibitem[Pay13]{Payne_boundarycomplexes}
Sam Payne, \emph{Boundary complexes and weight filtrations}, Michigan Math. J.
  \textbf{62} (2013), no.~2, 293--322.

\bibitem[PR01]{PolishchukRothstein}
A.~Polishchuk and M.~Rothstein, \emph{Fourier transform for {$D$}-algebras.
  {I}}, Duke Math. J. \textbf{109} (2001), no.~1, 123--146.

\bibitem[PS08]{PetersSteenbrink}
Chris A.~M. Peters and Joseph H.~M. Steenbrink, \emph{Mixed {H}odge
  structures}, Ergebnisse der Mathematik und ihrer Grenzgebiete. 3. Folge. A
  Series of Modern Surveys in Mathematics [Results in Mathematics and Related
  Areas. 3rd Series. A Series of Modern Surveys in Mathematics], vol.~52,
  Springer-Verlag, Berlin, 2008.

\bibitem[Sim92]{Simpson_nonabelianHodge}
Carlos~T. Simpson, \emph{Higgs bundles and local systems}, Inst. Hautes
  \'{E}tudes Sci. Publ. Math. (1992), no.~75, 5--95.

\bibitem[Sim94a]{SimI}
\bysame, \emph{Moduli of representations of the fundamental group of a smooth
  projective variety. {I}}, Inst. Hautes \'{E}tudes Sci. Publ. Math. (1994),
  no.~79, 47--129.

\bibitem[Sim94b]{SimII}
\bysame, \emph{Moduli of representations of the fundamental group of a smooth
  projective variety {II}}, Publications Math\'ematiques de l'IH\'ES
  \textbf{80} (1994), 5--79 (en).

\bibitem[Sim97]{Simpson_Hodgefiltration}
Carlos Simpson, \emph{The {H}odge filtration on nonabelian cohomology},
  Algebraic geometry---{S}anta {C}ruz 1995, Proc. Sympos. Pure Math., vol.~62,
  Amer. Math. Soc., Providence, RI, 1997, pp.~217--281.

\bibitem[Spa95]{Spanier}
Edwin~H. Spanier, \emph{Algebraic topology}, Springer-Verlag, New York,
  [1995?], Corrected reprint of the 1966 original.

\bibitem[Ste06]{Stepanov}
D.~A. Stepanov, \emph{A remark on the dual complex of a resolution of
  singularities}, Uspekhi Mat. Nauk \textbf{61} (2006), no.~1(367), 185--186.

\bibitem[THT07]{THT}
K.~Takeuchi, R.~Hotta, and T.~Tanisaki, \emph{D-modules, perverse sheaves, and
  representation theory}, Progress in Mathematics, Birkh{\"a}user Boston, 2007.

\bibitem[Thu07]{Thuillier}
Amaury Thuillier, \emph{G\'{e}om\'{e}trie toro\"{\i}dale et g\'{e}om\'{e}trie
  analytique non archim\'{e}dienne. {A}pplication au type d'homotopie de
  certains sch\'{e}mas formels}, Manuscripta Math. \textbf{123} (2007), no.~4,
  381--451.

\bibitem[Zha17]{ZHA}
Zili Zhang, \emph{Multiplicativity of perverse filtration for {H}ilbert schemes
  of fibered surfaces}, Advances in Mathematics \textbf{312} (2017), 636--679.

\bibitem[Zha21]{Zhang_cluster}
Zili Zhang, \emph{The {$P=W$} identity for cluster varieties}, Math. Res. Lett.
  \textbf{28} (2021), no.~3, 925--944.

\end{thebibliography}

\end{document}